\newtheorem{thm}{Theorem}[section]
\newtheorem{lemm}[thm]{Lemma}
\newtheorem{coro}[thm]{Corollary}
\newtheorem{prop}[thm]{Proposition}
\newtheorem{thm-defi}[thm]{Theorem--Definition}
\theoremstyle{definition}
\newtheorem{expl}[thm]{Example}
\newtheorem{defi}[thm]{Definition}
\newtheorem{remark}[thm]{Remark}
\newtheorem{quest}[thm]{Question}
\begin{document}

\title{Hopf algebra and the duality operation for $\mathfrak{gl}_n(\mathbb{F}_q)$}

\author{Zhe Chen}

\address{Department of Mathematics, Shantou University, Shantou, 515063, China}
\email{zhechencz@gmail.com}

\begin{abstract}
In this paper we study the space $C(\mathfrak{gl}_n(\mathbb{F}_q))$ of complex invariant functions on $\mathfrak{gl}_n(\mathbb{F}_q)$, through a Hopf algebra viewpoint. First, we consider a variant notion of Zelevinsky's PSH algebra defined over the real numbers $\mathbb{R}$. In particular, we show that two specific $\mathbb{R}$-lattices inside the complex Hopf algebra $\bigoplus_nC(\mathfrak{gl}_n(\mathbb{F}_q))$ are real PSH algebras, and that they do not descend to $\mathbb{Z}$. Then, among consequences, we prove that every element in $C(\mathfrak{gl}_n(\mathbb{F}_q))$ is a linear combination of Harish-Chandra inductions of Kawanaka's pre-cuspidal functions, and give a conceptual characterisation of duality operation for $\mathfrak{gl}_n(\mathbb{F}_q)$, which in turn allows us to give a new proof of a classical result of Kawanaka.
\end{abstract}

\maketitle

\tableofcontents

\section{Introduction}

Let $G$ be a connected reductive group defined over a finite field $\mathbb{F}_q$, and let $\mathcal{G}$ be its Lie algebra. By definition, an invariant function on $\mathcal{G}(\mathbb{F}_q)$ is a $\mathbb{C}$-valued function on $\mathcal{G}(\mathbb{F}_q)$, that is invariant under the adjoint action of $G(\mathbb{F}_q)$. These functions, introduced in \cite{Springer1974-1975_Caracteres}, are closely related to representations of reductive groups over a finite field \cite{Lusztig_1987_fourier_Lie_alg} and over a local ring \cite{Hill_1995_Regular}, as well as to representations of Weyl groups \cite{Springer_1976_TrigSum}. 

\vspace{2mm} In this paper, we study the invariant functions on $\mathfrak{gl}_n(\mathbb{F}_q)$, simultaneously for all $n$, through a Hopf algebra viewpoint. We will focus on two aspects.

\vspace{2mm} The first aspect is motivated by Zelevinsky's PSH algebra approach to  $\bigoplus_{n}\mathbb{Z}\left[\mathrm{Irr}(\mathrm{GL}_n(\mathbb{F}_q))\right]$ (see \cite{Zelevinsky_1981_bk}), where ``PSH'' stands for ``Positive Self-adjoint Hopf''. Zelevinsky's work has been continued to classical groups in \cite{van_Leeuwen_Hopf_alg_classical_gp_1991} and \cite{Thiem--Vinroot_2007_AdvMath_finite_unitary_group}, and to unipotent groups in \cite{Gagnon_2023_Hopf}. In this paper, instead of groups, we consider a Lie algebra incarnation of $\bigoplus_{n}\mathbb{Z}\left[\mathrm{Irr}(\mathrm{GL}_n(\mathbb{F}_q))\right]$, through the PSH algebra viewpoint. 

\vspace{2mm} The second aspect of this paper is concerned with the remarkable duality operation, which was first considered by Lusztig, Curtis \cite{Curtis_duality_1980_JA}, Alvis \cite{Alvis_BAMS_1979_duality}, and Kawanaka \cite{Kawanaka_Fourier_LieAlg_FiniteField_1982}. This construction was defined for both $G(\mathbb{F}_q)$ and $\mathcal{G}(\mathbb{F}_q)$, and later also for $p$-adic groups by Aubert \cite{Aubert_1996_TAMS_Dualite}. The definition of duality operation involves the choices of a Borel subgroup and of various Levi subgroups, but it turns out that it is actually independent of these choices. The aim of this aspect is to find a more conceptual definition/characterisation of the Lie algebra duality operation.

\vspace{2mm} Two main results of this paper are:
\begin{itemize}
\item[1.]  The construction of two real PSH algebras, non-descending in the sense that they cannot be obtained via base change from $\mathbb{Z}$ (Theorem~\ref{thm: a real PSH alg} and Corollary~\ref{coro:another real PSH});
\item[2.] a choices-free definition/characterisation of the duality operations for the Lie algebras $\mathfrak{gl}_n$, simultaneously for all $n$ (Theorem--Definition~\ref{thm-defi:characterisation of duality using pre-cuspidal functions} and Proposition~\ref{prop:coincidence of the two def of duality}).
\end{itemize}
While our approach is Hopf algebra, one particular feature of the characterisation of duality operation is that its statement is free of terminology in Hopf algebra, which gives the possibility to extend the characterisation to general reductive groups and their Lie algebras; see Question~\ref{quest: general groups/Lie algebras?}. Along the way to prove these results, we also obtain some interesting by-products, to be described in the below.

\vspace{2mm} In Section~\ref{sec:prelim} we give a brief review of invariant functions, including a crucial Mackey-type formula of Letellier. Then in Proposition~\ref{prop: C(q) is Hopf} we show that 
\begin{itemize}
\item [3.] the graded sum $C(q)$ of the complex spaces of invariant functions on the finite Lie algebras $\mathfrak{gl}_n(\mathbb{F}_q)$, for all $n$, form a connected graded Hopf algebra over $\mathbb{C}$. 
\end{itemize}
This should be a known property, but its proof seems did not appear in literature in the before. Then in Section~\ref{sec:pass from Hopf to PSH} we introduce the notion of real PSH algebra, which is a direct $\mathbb{R}$-analogue of Zelevinsky's PSH algebra over $\mathbb{Z}$. A real PSH algebra is called non-descending if it cannot be realised over $\mathbb{Z}$ (see Definition~\ref{defi: descending}). From the complex Hopf algebra $C(q)$ we construct an $\mathbb{R}$-lattice, which is a non-descending real PSH algebra (the first  of the two mentioned above).

\vspace{2mm} In Section~\ref{sec:duality} we study duality operations for $\mathfrak{gl}_n(\mathbb{F}_q)$. We show that
\begin{itemize}
\item [4.] the complex invariant functions are precisely the linear combinations of the Harish-Chandra inductions of Kawanaka's pre-cuspidal functions (Theorem~\ref{thm:pre-cuspidal}), and that
\item [5.] the (unique) antipode of the Hopf algebra $C(q)$  is an alternating sum of duality operations (Proposition~\ref{prop:dual functor as antipode});
\end{itemize} 
note that the former statement can be viewed as a Lie algebra analogue to the classical result asserting that every character of $\mathrm{GL}_n(\mathbb{F}_q)$ is a uniform function (\cite[Theorem~3.2]{Lusztig_Srinivasan_char_finite_unitary_gp_1977}). These results in turn allow us to give the characterisation of duality operation mentioned above, as well as a ``finite analogue'' in Proposition~\ref{prop: finite analogue}. They also lead to
\begin{itemize}
\item [6.] a new proof of Kawanaka's classical theorem saying that the Lie algebra version of duality operation is involutive and isometric (Corollary~\ref{coro:involutivity of D_n})
\end{itemize}
based on  basic properties of Hopf algebras. In Remark~\ref{remark:transition to groups} we discuss the transition of results from the Lie algebra $\mathfrak{gl}_n(\mathbb{F}_q)$ to the  group $\mathrm{GL}_n(\mathbb{F}_q)$. For the group version of duality operation, one of its key properties is the preservation of irreducibility. However, for the Lie algebra version this preservation fails in general (by \cite[(5.3)]{Lehrer_SpInvFunc_1996}); in the end of this paper we give an application of this failure:  the construction of our second non-descending real PSH algebra mentioned above.

\vspace{2mm} Our interests in invariant functions of finite Lie algebras come from our studies on representations of Lie type groups over local rings and their relations with adjoint orbits (e.g.\ \cite{ChenStasinski_2016_algebraisation},\cite{Chen_2019_flag_orbit},\cite{ChenStasinski_2023_algebraisation_II}), to which we expect that this work will be useful as well.

\vspace{2mm} \noindent {\bf Acknowledgement.}  The author thanks Kei Yuen Chan for informing him Tadi\'c's works in the $p$-adic group setting (see \cite{Tadic+1994+Rep_classical_p-adic_gp},\cite{Tadic+1990+Crelles}). During this work, the author is partially supported by the Natural Science Foundation of Guangdong No.~2023A1515010561.

\section{Preliminaries}\label{sec:prelim}

Let $G$ be a connected reductive group over $\mathbb{F}_q$, and let $\mathcal{G}$ be its Lie algebra. We use the single notation $F$ for the geometric Frobenius endomorphisms of both $G$ and $\mathcal{G}$, which should not cause a confusion. The invariant characters of $\mathcal{G}^F$ are the complex characters of the additive group of $\mathcal{G}^F$ which are invariant with respect to the adjoint action of $G^F$. An invariant character is called irreducible, if it is not a sum of two non-zero invariant characters. Let $C(\mathcal{G}^F)$ be the complex space consisting of the complex-valued functions on $\mathcal{G}^F$ which are invariant under the adjoint action of $G^F$; this is a hermitian space with
$$(f,g)_{\mathcal{G}^F}:=\frac{1}{|G^F|}\sum_{x\in\mathcal{G}^F}f(x)\overline{g(x)},$$
and the invariant irreducible characters form an orthogonal basis (see \cite[Proposition~2.1]{Let09CharRedLieAlg}). Elements in $C(\mathcal{G}^F)$ are called invariant functions of $\mathcal{G}^F$. In the past decades, many works have been done on this space and its relation with representations of finite groups of Lie type; see e.g.\ \cite{Springer_1976_TrigSum},\cite{Springer_SteinbergFunction_1980},\cite{Lusztig_1987_fourier_Lie_alg},\cite{Lehrer_SpInvFunc_1996},\cite{Let2005book}. We are mainly interested in the case that $G=\mathrm{GL}_n$.

\vspace{2mm} Let $P$ be an $F$-stable parabolic subgroup (of $G$) with an $F$-stable Levi component $L$, and let $\mathcal{P},\mathcal{L}$ be their Lie algebras, respectively. Then there are two important operations on invariant functions: The Harish-Chandra restriction
$${^*R}_{\mathcal{L}\subseteq\mathcal{P}}^{\mathcal{G}}(f)(x)=\frac{1}{|U_P^F|}\sum_{y\in \mathcal{U}_P^F}f(x+y),$$
and the Harish-Chandra induction
$${R}_{\mathcal{L}\subseteq\mathcal{P}}^{\mathcal{G}}(f)(x)=\frac{1}{|P^F|}\sum_{\substack{g\in G^F\\ {^gx}\in \mathcal{P}^F}}f(\pi_{\mathcal{L}}({^gx})),$$
where $\pi_{\mathcal{L}}$ denotes the projection from $\mathcal{P}$ to $\mathcal{L}$. An important analogy with finite groups of Lie type is that these operations enjoy a Mackey-type intertwining formula:

\begin{prop}[\cite{Let2005paper}]\label{prop: Mackey}
Let $Q$ be an $F$-stable parabolic subgroup of $G$, and let $M$ be an $F$-stable Levi subgroup of $Q$. Denote by $\mathcal{Q}, \mathcal{M}$ their corresponding Lie algebras. Then
\begin{equation*}
{^*R}_{\mathcal{L}\subseteq\mathcal{P}}^{\mathcal{G}}
\circ 
{R}_{\mathcal{M}\subseteq\mathcal{Q}}^{\mathcal{G}}
=\sum_{s\in L^F\backslash S_G(L,M)^F  /M^F}
{R}_{\mathcal{L}\cap {^s\mathcal{M}}\subseteq \mathcal{L}\cap {^s\mathcal{Q}} }^{\mathcal{L}} 
\circ
{^*R}_{\mathcal{L}\cap {^s\mathcal{M}} \subseteq  \mathcal{P}\cap {^s\mathcal{M}}  }^{{^s\mathcal{M}}}
\circ
\mathrm{ad}(s)
\end{equation*}
where $S_G(L,M):=\{s\in G\ |\ L\cap {^sM}\ \textrm{contains a maximal torus of}\ G \}$.
\end{prop}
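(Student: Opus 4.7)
My plan is to imitate the classical proof of the Mackey intertwining formula for Harish-Chandra induction/restriction in the group setting (as in Deligne-Lusztig, or Digne-Michel), translated to the additive Lie algebra context. The replacement is essentially formal: conjugation $gxg^{-1}$ becomes the adjoint action $\mathrm{ad}(g)(x)$, and membership in $Q^F$ becomes membership in $\mathcal{Q}^F$; the counting arguments on parabolic double cosets and on unipotent radicals carry over with only cosmetic changes.

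First, I would unfold both operators from their definitions. For $x \in \mathcal{L}^F$,
\begin{equation*}
\bigl({}^*R_{\mathcal{L}\subseteq \mathcal{P}}^{\mathcal{G}} \circ R_{\mathcal{M}\subseteq \mathcal{Q}}^{\mathcal{G}}\bigr)(f)(x) = \frac{1}{|U_P^F|\,|Q^F|} \sum_{y\in \mathcal{U}_P^F}\sum_{\substack{g\in G^F \\ {}^g(x+y)\in \mathcal{Q}^F}} f\bigl(\pi_{\mathcal{M}}({}^g(x+y))\bigr).
\end{equation*}
I would then break the outer sum along the double cosets $P^F\backslash G^F/Q^F$. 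A standard reduction should show that only those double cosets whose representative $s$ lies in $S_G(L,M)^F$ contribute, while each remaining coset drops out after evaluating the inner $y$-sum as a sum over a nontrivial unipotent subgroup on which the summand is translation invariant; this is the Lie algebra analogue of the parabolic double coset lemma. Restricting to $s \in L^F\backslash S_G(L,M)^F/M^F$ then gives the correct indexing set on the right-hand side.

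For each surviving $s$, I would parametrize $g \in P^F s Q^F$ via a Levi-unipotent factorization $g = \ell u s m v$ with $\ell\in L^F$, $u\in U_P^F$, $m\in M^F$, $v\in U_Q^F$, keeping careful track of the stabilizer $L^F \cap {}^s\!M^F$ that controls the redundancy. The key structural input is the splitting
\begin{equation*}
\mathcal{U}_P^F = (\mathcal{U}_P \cap {}^s\mathcal{M})^F \oplus (\mathcal{U}_P \cap {}^s\mathcal{U}_Q)^F,
\end{equation*}
which partitions $y$ into a piece that feeds into the $\mathcal{M}$-level restriction and a piece that is killed by $\pi_{\mathcal{M}}$. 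After substituting, using $M^F$-invariance of $f$ to absorb $m$, and using that shifts by $\mathcal{U}_Q$ are invisible to $\pi_{\mathcal{M}}$, what is left rearranges into exactly
\begin{equation*}
\Bigl(R_{\mathcal{L}\cap {}^s\mathcal{M} \subseteq \mathcal{L}\cap {}^s\mathcal{Q}}^{\mathcal{L}} \circ {}^*R_{\mathcal{L}\cap {}^s\mathcal{M} \subseteq \mathcal{P}\cap {}^s\mathcal{M}}^{{}^s\mathcal{M}} \circ \mathrm{ad}(s)\Bigr)(f)(x),
\end{equation*}
summed over $s$.

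The main obstacle, as in the group case, is purely bookkeeping: one must verify that the normalization constants $|U_P^F|^{-1}$, $|Q^F|^{-1}$, $|P^F|^{-1}$, $|U_{\mathcal{P}\cap {}^s\mathcal{M}}^F|^{-1}$ and the sizes of the double cosets $P^F s Q^F$ combine to the correct factor of $1$, and that each intersection appearing (for instance $\mathcal{P}\cap {}^s\mathcal{M}$) is genuinely a parabolic subalgebra of ${}^s\mathcal{M}$ with Levi component $\mathcal{L}\cap {}^s\mathcal{M}$. Both points reduce to well-established structural results on intersections of parabolics and their unipotent radicals, which is precisely what Letellier establishes in \cite{Let2005paper}; I would invoke his double coset lemma and the accompanying orthogonality for the unipotent sum at the two key junctures rather than reprove them here.
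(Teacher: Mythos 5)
The paper itself gives no proof of this statement: it is quoted from Letellier \cite[Proposition~3.2.9]{Let2005paper}, who establishes the more general Mackey formula for the Lie algebra version of Lusztig induction (for not necessarily $F$-stable parabolics) via two-variable Green functions. For the Harish-Chandra case actually needed here, your strategy --- transposing the elementary double-coset computation of \cite[Theorem~5.3.1]{DM_book_2nd_edition} to the additive setting --- is the natural alternative and does work; it is more elementary than the cited proof, at the price of covering only the $F$-stable case. So the route is legitimate and genuinely different from the source the paper relies on.

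Two steps of your sketch are wrong as stated, though both are repairable. First, nothing ``drops out'' of the double-coset decomposition: for $F$-stable $P$ and $Q$, every double coset in $P^F\backslash G^F/Q^F$ already corresponds to a class in $L^F\backslash S_G(L,M)^F/M^F$ (this is the content of \cite[Lemma~5.2.2]{DM_book_2nd_edition}, the bijection $\delta$ the paper later uses for $\mathrm{GL}_n$), so what you need at this stage is that bijection, not a vanishing argument killing extraneous cosets. Second, the identity you call the key structural input, $\mathcal{U}_P^F=(\mathcal{U}_P\cap{}^s\mathcal{M})^F\oplus(\mathcal{U}_P\cap{}^s\mathcal{U}_Q)^F$, is false in general: already for $G=\mathrm{GL}_2$, $P=Q$ a Borel, $L=M$ a maximal torus and $s$ the nontrivial Weyl element, the right-hand side is zero while the left-hand side is not. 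The correct statements are $\mathcal{U}_P\cap{}^s\mathcal{Q}=(\mathcal{U}_P\cap{}^s\mathcal{M})\oplus(\mathcal{U}_P\cap{}^s\mathcal{U}_Q)$ together with $\mathcal{U}_P=(\mathcal{U}_P\cap{}^s\mathcal{Q})\oplus(\mathcal{U}_P\cap{}^s\mathcal{U}_{Q^-})$, where $Q^-$ is the parabolic opposite to $Q$ relative to $M$; and the summand $\mathcal{U}_P\cap{}^s\mathcal{U}_{Q^-}$ is eliminated not by translation invariance but by the support condition ${}^g(x+y)\in\mathcal{Q}^F$ (after conjugating the argument into $\mathcal{L}+\mathcal{U}_P$, its component in ${}^s\mathcal{U}_{Q^-}$ must vanish because $\mathcal{L}\cap\mathcal{U}_P=0$). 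Relatedly, the redundancy in the factorization $g=\ell usmv$ is $|P^F\cap{}^sQ^F|$, not $|L^F\cap{}^sM^F|$; it is the four-fold product decomposition of $P\cap{}^sQ$ that makes the normalizing constants close up to $1$. With these corrections the computation goes through.
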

Actually, Letellier \cite[Proposition~3.2.9]{Let2005paper}  proves the above formula for the wider class of the Lie algebra version of Lusztig induction/restriction (for not necessarily $F$-stable $P$ and $Q$), by using techniques of two-variable Green functions. In this paper we are concerned only  with the case that ${P}$ and ${Q}$ are $F$-stable.

\vspace{2mm} It is a formal consequence of the Mackey formula that ${R}_{\mathcal{L}\subseteq\mathcal{P}}^{\mathcal{G}}$ and ${^*R}_{\mathcal{L}\subseteq\mathcal{P}}^{\mathcal{G}}$ are independent of the choice of $\mathcal{P}$ (see the argument of \cite[Theorem~5.3.1]{DM_book_2nd_edition}), so we will use the simpler notation ${R}_{\mathcal{L}}^{\mathcal{G}}$ and ${^*R}_{\mathcal{L}}^{\mathcal{G}}$.

\section{A connected graded Hopf algebra}\label{sec: a complex Hopf alg}

Consider the graded $\mathbb{C}$-module
$$C(q):=\bigoplus_{n\in\mathbb{Z}_{\geq0}} C_n(q),$$
where $C_n(q):=C(\mathfrak{gl}_n(\mathbb{F}_q))$ (here we take $\mathfrak{gl}_0(\mathbb{F}_q)=\{0\}$, so $C_0(q)=\mathbb{C}$). We shall show that this module carries a connected graded Hopf algebra structure which is both commutative and co-commutative. To do so, we first need to define multiplication/co-multiplication and unit/co-unit. 

\vspace{2mm} For convenience, we use the following simpler notation: 
\begin{itemize}
\item[-] $G_n:=\mathrm{GL}_n$ and $\mathcal{G}_n:=\mathfrak{gl}_n$;
\item[-] write $L_{k,l}$ for the block-diagonal subgroup whose first block is of size $k\times k$ and the second block is of size $l\times l$; write $\mathcal{L}_{k,l}$ for its Lie algebra;
\item[-] write ${R}_{k,l}^{n}$ and ${^*R}_{k,l}^{n}$ short for  ${R}_{\mathcal{L}_{k,l}}^{\mathcal{G}_n}$ and ${^*R}_{\mathcal{L}_{k,l}}^{\mathcal{G}_n}$, respectively, if $n=k+l$.

\end{itemize}

\begin{defi}
(i) For each pair of positive integers $k,l$, define
\begin{equation*}
m_{k,l}\colon C(\mathcal{G}_k^F)\otimes C(\mathcal{G}_l^F)\cong C(\mathcal{G}_k^F\times \mathcal{G}_l^F)\longrightarrow C(\mathcal{G}_{k+l}^F)
\end{equation*}
by the parabolic induction $R^{k+l}_{k,l}$; taking the sum of all $m_{k,l}$ gives the multiplication map $m\colon C(q)\otimes C(q)\cong C(q)$;

(ii)  Define
\begin{equation*}
m^*_{k,l}\colon C(\mathcal{G}_{k+l}^F) 
\longrightarrow C(\mathcal{G}_k^F\times \mathcal{G}_l^F)
\cong C(\mathcal{G}_k^F)\otimes C(\mathcal{G}_l^F)
\end{equation*}
by ${^*R}^{k+l}_{k,l}$. Then the co-multiplication $m^*\colon C(q)\rightarrow  C(q)\otimes C(q)$ is defined, at each component $C(\mathcal{G}_{n}^F)$, by $m^*:=\sum_{k+l=n}m^*_{k,l}$.

(iii) The unit $e\colon C_0(q)\rightarrow C(q)$ and the co-unit $e^*\colon C(q) \rightarrow C_0(q)$ are the natural embedding and the quotient, respectively. 
\end{defi}
Clearly $m,m^*,e,e^*$ are morphisms of graded modules, with the grading on $C(q)\otimes C(q)$ given by $(C(q)\otimes C(q))_n:=\bigoplus_{k+l=n}(C_k(q)\otimes C_l(q))$.

\begin{prop}\label{prop: C(q) is Hopf}
With respect to the operations $m,m^*,e,e^*$, the space $C(q)$ is a connected graded Hopf algebra over $\mathbb{C}$. Moreover,  $C(q)$ is both commutative and co-commutative. 
\end{prop}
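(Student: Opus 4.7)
The plan is to verify the bialgebra axioms and commutativity/cocommutativity, and then to invoke the standard fact that any connected graded bialgebra over a field admits a (unique) antipode given by the recursive formula, which upgrades $C(q)$ from a bialgebra to a Hopf algebra. Throughout, the connectedness $C_0(q)=\mathbb{C}$ is immediate from the definition, and the grading compatibility of all four structure maps is built into their definition, so one really has to check: (a) associativity, (b) coassociativity, (c) unit and counit axioms, (d) the bialgebra compatibility $m^*\circ m = (m\otimes m)\circ(\mathrm{id}\otimes\tau\otimes\mathrm{id})\circ(m^*\otimes m^*)$, and (e) commutativity of $m$ and co-commutativity of $m^*$.

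For (a) and (b), I would use the transitivity of Harish-Chandra induction and restriction for nested parabolics $\mathcal{L}_{k,l,m}\subseteq \mathcal{L}_{k,l+m}\subseteq \mathcal{G}_{k+l+m}$ and $\mathcal{L}_{k,l,m}\subseteq \mathcal{L}_{k+l,m}\subseteq \mathcal{G}_{k+l+m}$. Both of the iterated compositions $m\circ(m\otimes\mathrm{id})$ and $m\circ(\mathrm{id}\otimes m)$ equal $R^{k+l+m}_{\mathcal{L}_{k,l,m}}$, and symmetrically for $m^*$; transitivity itself follows in the usual way by unravelling the defining sums, or alternatively as a formal consequence of the Mackey formula in Proposition~\ref{prop: Mackey} applied with $L=G$ or $M=G$. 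The unit and counit axioms in (c) are trivial because $R^n_{0,n}$ and ${}^*R^n_{0,n}$ both act as the identity after the canonical identification $C_0(q)\otimes C_n(q)\cong C_n(q)$, the unipotent radical being trivial on one side and the induction sum collapsing on the other.

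The main obstacle is (d), which is exactly where the full strength of Letellier's Mackey formula (Proposition~\ref{prop: Mackey}) is used. I would apply Proposition~\ref{prop: Mackey} to $L=L_{a,b}$ and $M=L_{k,l}$ with $a+b=k+l=n$; the crucial combinatorial input is that double coset representatives for $L_{a,b}^F\backslash S_G(L_{a,b},L_{k,l})^F/L_{k,l}^F$ in $G_n=\mathrm{GL}_n$ are parametrised by $2\times 2$ non-negative integer matrices $\left(\begin{smallmatrix} i & j\\ i' & j'\end{smallmatrix}\right)$ with row sums $(a,b)$ and column sums $(k,l)$, and for such a representative $s$ one has $L\cap{}^sM\cong L_{i,j,i',j'}$. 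Unwinding the Mackey sum on $f\otimes g\in C_k(q)\otimes C_l(q)$ then produces
\begin{equation*}
m^*_{a,b}\bigl(m_{k,l}(f\otimes g)\bigr)
=\sum_{\substack{i+j=a,\ i'+j'=b\\ i+i'=k,\ j+j'=l}}
R^a_{i,j}\otimes R^b_{i',j'}\bigl(({}^*R^k_{i,i'}\otimes{}^*R^l_{j,j'})(f\otimes g)\bigr),
\end{equation*}
which, summed over $a+b=n$, is precisely the right-hand side of the bialgebra compatibility after identifying it with $(m\otimes m)\circ(\mathrm{id}\otimes\tau\otimes\mathrm{id})\circ(m^*\otimes m^*)$ applied to $f\otimes g$. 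The careful part is matching signs/indices between the Mackey double coset indexing and the tensor flip $\tau$.

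For (e), commutativity of $m$ follows from the fact that $L_{k,l}$ and $L_{l,k}$ are conjugate inside $G_{k+l}$ by the block-swap Weyl element, which is defined over $\mathbb{F}_q$, so $R^{k+l}_{k,l}$ equals $R^{k+l}_{l,k}$ after the canonical flip $C_k(q)\otimes C_l(q)\cong C_l(q)\otimes C_k(q)$ — this is a direct consequence of the independence of Harish-Chandra induction on the parabolic used (cited after Proposition~\ref{prop: Mackey}), together with the observation that conjugation by a Weyl element preserves $G^F$-invariant functions. Co-commutativity of $m^*$ is the dual statement, with the same proof using ${}^*R$ in place of $R$. Finally, since $C_0(q)=\mathbb{C}$ and the bialgebra is graded and connected, the antipode $S$ exists and is uniquely determined by $S(1)=1$ together with the recursion $S(x)=-x-\sum S(x_{(1)})x_{(2)}$ on the reduced coproduct in positive degree, completing the proof that $C(q)$ is a Hopf algebra.
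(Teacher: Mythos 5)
Your proposal is correct and follows essentially the same route as the paper: associativity/coassociativity from transitivity, (co)commutativity from independence of the parabolic, the bialgebra compatibility via Letellier's Mackey formula together with the parametrisation of double cosets by $2\times 2$ nonnegative integer matrices with prescribed row and column sums, and the standard existence/uniqueness of the antipode for connected graded bialgebras; your displayed Mackey identity matches the paper's equation (5) under the obvious relabelling. One minor caveat: transitivity of $R$ and ${}^*R$ is not a formal consequence of the Mackey formula specialised to $L=G$ or $M=G$ (that specialisation is vacuous), but your primary justification by direct computation (or by citing Letellier, as the paper does) is fine.
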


Replacing $\mathbb{C}$-valued invariant functions by $\mathbb{R}$-valued invariant functions, the same statement of this Proposition also appears in the announcement \cite{Cuenca_Olshanski_2022_Mackey}. However, for our purpose (in Section~\ref{sec:pass from Hopf to PSH}) we cannot directly restrict to the $\mathbb{R}$-valued case, as there are too few $\mathbb{R}$-valued invariant characters.

\begin{proof}
The connectedness and the gradedness are by definition. The associativity and the co-associativity are formal consequences of the adjunction and the transitivity of ${R}_{k,l}^{n}$ and ${^*R}_{k,l}^{n}$, which are proved in \cite[Proposition~2.3.4 and Proposition~2.3.6]{Let2005paper}. The commutativity and the co-commutativity are formal consequences of the independence of ${R}_{k,l}^{n}$ and ${^*R}_{k,l}^{n}$ on the choice of parabolic subgroups.

\vspace{2mm} Since every connected graded bi-algebra admits a unique antipode (\cite[Page~238]{Sweedler1969hopf}), it remains to show that $C(q)$ is a bi-algebra, or equivalently, to show that $m^*$ is a ring morphism with respect to the multiplication on $C(q)\otimes C(q)$ given by 
$$(x\otimes y)\cdot (x'\otimes y'):= m(x\otimes x')\otimes m(y\otimes y').$$ 
In other words, we need to show that: For any $\rho_1\in C_{n_1}(q)$ and $\rho_2\in C_{n_2}(q)$, there is the identity
\begin{equation}\label{eqn:ring mor1}
m^*(m(\rho_1\otimes \rho_2))=m^*(\rho_1)\cdot m^*(\rho_2).
\end{equation}
Concerning the group version of \eqref{eqn:ring mor1} for $\mathrm{GL}_n$, there is a discussion in the appendix of \cite{Zelevinsky_1981_bk}, and one can find a more detailed (and a bit different) argument, using inner products, in \cite[Corollary~4.3.10]{Grinberg_Reiner_Notes_Hopf}. As we will see in the below, for the Lie algebra version actually the same method works.

\vspace{2mm} By definition, the left hand side of \eqref{eqn:ring mor1} is equal to
$$m^*\left(R_{n_1,n_2}^n\rho_1\boxtimes\rho_2\right)=\sum_{\substack{s+t=n,\\ s,t\in\mathbb{Z}_{\geq0}}}{^*R_{s,t}^n}\left( R_{n_1,n_2}^n\rho_1\boxtimes\rho_2 \right)$$
where $n:=n_1+n_2$, and we have
\begin{equation*}
m^*(\rho_1)=\sum_{\substack{a+b=n_1,\\ a,b\in\mathbb{Z}_{\geq0}}}{^*R_{a,b}^{n_1}}\left( \rho_1\right)
\quad
\textrm{and}
\quad
m^*(\rho_2)=\sum_{\substack{c+d=n_2,\\ c,d\in\mathbb{Z}_{\geq0}}}{^*R_{c,d}^{n_2}}\left( \rho_2\right).
\end{equation*}
So \eqref{eqn:ring mor1} is equivalent to that, for any $s,t\in\mathbb{Z}_{\geq0}$ with $s+t=n$ one has
\begin{equation}\label{eqn:ring mor2}
{^*R_{s,t}^n}\left( R_{n_1,n_2}^n\rho_1\boxtimes\rho_2 \right)=\sum_{\substack{a+b=n_1,\ c+d=n_2,\\ a+c=s,\ b+d=t,\\ a,b,c,d\in\mathbb{Z}_{\geq0}}}
{^*R_{a,b}^{n_1}}\left( \rho_1\right)\cdot{^*R_{c,d}^{n_2}}\left( \rho_2\right).
\end{equation}
By the Mackey-type formula the left hand side of \eqref{eqn:ring mor2} is equal to
\begin{equation}
\sum_{w\in L_{s,t}^F\backslash S_{G_n}(L_{s,t},L_{n_1,n_2})^F  / L_{n_1,n_2}^F}
{R}_{\mathcal{L}_{s,t}\cap {^w\mathcal{L}_{n_1,n_2}}}^{\mathcal{L}_{s,t}} 
\circ
{^*R}_{\mathcal{L}_{s,t}\cap {^w\mathcal{L}_{n_1,n_2}}}^{{^w\mathcal{L}_{n_1,n_2}}} 
\left({^w(\rho_1\boxtimes\rho_2)}\right),
\end{equation}
in which we need to analyse the index set.

\vspace{2mm} Consider
$$\mathbb{I}:=\left\{ 
\begin{bmatrix}
a & c\\
b & d
\end{bmatrix} \in M_2(\mathbb{Z}_{\geq0})
\mid
a+b=n_1,\ c+d=n_2,\ a+c=s,\ b+d=t
\right\}.$$
By \cite[Proposition~A3.2]{Zelevinsky_1981_bk} (see also \cite[Lemma~5.2.2]{DM_book_2nd_edition}) there is a natural bijection
$$\delta\colon L_{s,t}^F\backslash S_{G_n}(L_{s,t},L_{n_1,n_2})^F  / L_{n_1,n_2}^F \longrightarrow\mathbb{I}$$
determined in the way that, if
$$\delta(w)=\begin{bmatrix}
a & c\\
b & d
\end{bmatrix},$$
then $w$ admits a canonical representative (again denoted by $w$) in $G_n$:
$$w=
\begin{bmatrix}
I_a & & &  \\
& & I_c & \\
& I_b & & \\
& & & I_d\\
\end{bmatrix}\in G_n$$
(here $I_i$ denotes the identity matrix of size $i\times i$); we shall always use this representative. In particular, if $\delta(w)=\begin{bmatrix}
a & c\\
b & d
\end{bmatrix}$, then 
$$\mathcal{L}_{s,t}\cap {^w\mathcal{L}_{n_1,n_2}}=\mathcal{L}_{a,c}\times \mathcal{L}_{b,d}.$$
Thus, to prove \eqref{eqn:ring mor2} it suffices to show that, for each $\begin{bmatrix}
a & c\\
b & d
\end{bmatrix}\in\mathbb{I}$ one has
\begin{equation}\label{eqn: ringmor3}
{R}_{\mathcal{L}_{a,c}\times \mathcal{L}_{b,d}}^{\mathcal{L}_{s,t}} 
\circ
{^*R}_{\mathcal{L}_{a,c}\times \mathcal{L}_{b,d}}^{{^w\mathcal{L}_{n_1,n_2}}} 
\left({^w(\rho_1\boxtimes\rho_2)}\right)
={^*R_{a,b}^{n_1}}\left( \rho_1\right)\cdot{^*R_{c,d}^{n_2}}\left( \rho_2\right),
\end{equation}
where $w=\delta^{-1}\left(\begin{bmatrix}
a & c\\
b & d
\end{bmatrix}\right)$.

\vspace{2mm} Write
\begin{equation*}
{^*R_{a,b}^{n_1}}\left( \rho_1\right)=\sum_{\rho_1^a,\rho_1^b}\rho_1^a\boxtimes\rho_1^b
\quad
\textrm{and}
\quad
{^*R_{c,d}^{n_2}}\left( \rho_2\right)=\sum_{\rho_2^c,\rho_2^d}\rho_2^c\boxtimes\rho_2^d,
\end{equation*}
where the sums run over the possible terms (a variant of the Sweedler notation). For each quadruple $\rho_1^a,\rho_1^b,\rho_2^c,\rho_2^d$ we have
\begin{equation*}
\begin{split}
(\rho_1^a\boxtimes\rho_1^b)\cdot(\rho_2^c\boxtimes\rho_2^d)
&=R^s_{a,c}(\rho_1^a\boxtimes\rho_2^c)\otimes R^t_{b,d}(\rho_1^b\boxtimes\rho_2^d)\\
&=R^{\mathcal{L}_{s,t}}_{\mathcal{L}_{a,c}\times\mathcal{L}_{b,d}}((\rho_1^a\boxtimes\rho_2^c)\boxtimes(\rho_1^b\boxtimes\rho_2^d))\\
&=R^{\mathcal{L}_{s,t}}_{\mathcal{L}_{a,c}\times\mathcal{L}_{b,d}}(^w(\rho_1^a\boxtimes\rho_1^b\boxtimes\rho_2^c\boxtimes\rho_2^d)),
\end{split}
\end{equation*}
where the second equality follows from the definition of Harish-Chandra induction. So, to show \eqref{eqn: ringmor3} it suffices to show that
$${^*R}_{\mathcal{L}_{a,c}\times \mathcal{L}_{b,d}}^{{^w\mathcal{L}_{n_1,n_2}}} 
{^w(\rho_1\boxtimes\rho_2)}
=\sum_{\rho_1^a,\rho_1^b,\rho_2^c,\rho_2^d} {^w(\rho_1^a\boxtimes\rho_1^b\boxtimes\rho_2^c\boxtimes\rho_2^d)}.$$ 
Since
$${^{w^{-1}}\left( {^*R}_{\mathcal{L}_{a,c}\times \mathcal{L}_{b,d}}^{{^w\mathcal{L}_{n_1,n_2}}} 
{^w(\rho_1\boxtimes\rho_2)} \right)}
={^*R}_{{^{w^{-1}}(\mathcal{L}}_{a,c}\times \mathcal{L}_{b,d})}^{{\mathcal{L}_{n_1,n_2}}} 
{(\rho_1\boxtimes\rho_2)}
={^*R}_{{\mathcal{L}}_{a,b}\times \mathcal{L}_{c,d}}^{{\mathcal{L}_{n_1,n_2}}} 
{(\rho_1\boxtimes\rho_2)},$$
we only need to show
\begin{equation}\label{eqn: ringmor 4}
{^*R}_{{\mathcal{L}}_{a,b}\times \mathcal{L}_{c,d}}^{{\mathcal{L}_{n_1,n_2}}} 
{(\rho_1\boxtimes\rho_2)}=\sum_{\rho_1^a,\rho_1^b,\rho_2^c,\rho_2^d} {\rho_1^a\boxtimes\rho_1^b\boxtimes\rho_2^c\boxtimes\rho_2^d}.
\end{equation}
However, by definition we have 
$${^*R}_{{\mathcal{L}}_{a,b}\times \mathcal{L}_{c,d}}^{{\mathcal{L}_{n_1,n_2}}}(\rho_1\boxtimes\rho_2)
={^*R}_{{a,b}}^{{{n_1}}}(\rho_1)
\boxtimes 
{^*R}_{{c,d}}^{{{n_2}}}(\rho_2),$$
from which \eqref{eqn: ringmor 4} follows.
\end{proof}

\section{Real PSH algebra}\label{sec:pass from Hopf to PSH}

\begin{defi}\label{def: real PSH}
A real PSH algebra is a pair $(\mathcal{H},\Omega)$, where $\mathcal{H}=\oplus_{n\in\mathbb{Z}_{\geq0}}\mathcal{H}_n$ is a connected graded Hopf algebra defined over $\mathbb{R}$ and $\Omega$ is an $\mathbb{R}$-basis consisting of homogeneous elements, subject to the following conditions:
\begin{itemize}
\item[(P)] \emph{Positivity}. The  unit, co-unit, multiplication, and co-multiplication, restrict to maps from the subset $\mathbb{R}_{\geq0}[\Omega]$ (or $\mathbb{R}_{\geq0}[\Omega]\otimes \mathbb{R}_{\geq0}[\Omega]$) to $\mathbb{R}_{\geq0}[\Omega]$ (or $\mathbb{R}_{\geq0}[\Omega]\otimes \mathbb{R}_{\geq0}[\Omega]$).
\item[(S)] \emph{Self-adjointness}. The unit/co-unit, and the multiplication/co-multiplication, are adjoint pairs with respect to $(-,-)_{\Omega}$  and $(-,-)_{\Omega\otimes\Omega}$, where $(-,-)_{\Omega}$ and  $(-,-)_{\Omega\otimes\Omega}$ are, respectively, the (positive-definite) inner products on the spaces $\mathcal{H}$ and  $\mathcal{H}\otimes\mathcal{H}$, making the bases  $\Omega$ and $\Omega\otimes\Omega$  orthonormal.
\end{itemize} 
\end{defi}
Note that these conditions are just the PSH axioms of Zelevinsky's PSH algebra (\cite[Subsection~1.4]{Zelevinsky_1981_bk}) after formally replacing $\mathbb{Z}$ by $\mathbb{R}$ in his axioms. They reflect remarkable combinatorial properties of structure coefficients; see \cite{Grinberg_Reiner_Notes_Hopf} for more details.

\begin{prop}\label{prop:real PSH are symmetric}
Let $(\mathcal{H},\Omega)$ be a real PSH algebra, and let $\mathfrak{p}\subseteq \mathcal{H}$ be the subspace consisting of the primitive elements (i.e.\ the elements $x$ with $m^*(x)=1\otimes x+x\otimes 1$). Then $\mathcal{H}$ is isomorphic to the symmetric algebra $\mathrm{Sym}_{\mathbb{R}}\mathfrak{p}$ as Hopf algebras. In particular, $\mathcal{H}$ is both commutative and co-commutative, and it is a polynomial algebra in a basis of $\mathfrak{p}$.
\end{prop}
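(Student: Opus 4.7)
The plan is to break the proof into two stages: first derive commutativity and cocommutativity of $\mathcal{H}$ from (P) and (S), then invoke a classical structure theorem for such Hopf algebras over a field of characteristic zero.

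For the first stage, note that self-adjointness gives, for $\omega_1,\omega_2,\omega_3\in\Omega$,
\begin{equation*}
(m(\omega_1\otimes\omega_2),\omega_3)_{\Omega}=(\omega_1\otimes\omega_2,m^*(\omega_3))_{\Omega\otimes\Omega},
\end{equation*}
so interchanging the two factors of $m$ corresponds exactly to the $\tau$-twist on $m^*$; in particular, commutativity and cocommutativity are equivalent under (S). I would prove commutativity by induction on degree. The inductive step uses the bialgebra compatibility $m^*\circ m=(m\otimes m)\circ(\mathrm{id}\otimes\tau\otimes\mathrm{id})\circ(m^*\otimes m^*)$ to expand $m^*(\omega_1\omega_2)$ in terms of products of lower-degree coproducts (to which the inductive hypothesis applies), and then extracts coefficients against basis tensors via (S). Writing $\omega_1\omega_2=\sum_\omega a_\omega\omega$ and $\omega_2\omega_1=\sum_\omega b_\omega\omega$ with $a_\omega,b_\omega\geq 0$ by (P), one obtains identities of the form $\sum_\omega a_\omega^2=\sum_\omega b_\omega^2=\sum_\omega a_\omega b_\omega$. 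The equality case of Cauchy--Schwarz then forces $a_\omega=b_\omega$ for every $\omega$, yielding commutativity; cocommutativity follows by the equivalence noted above.

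For the second stage, with $\mathcal{H}$ now known to be a connected graded bicommutative Hopf algebra over $\mathbb{R}$ (a field of characteristic zero), the classical Cartier--Milnor--Moore theorem applies. The primitive subspace $\mathfrak{p}$ forms an abelian graded Lie algebra, and the natural map
\begin{equation*}
\mathrm{Sym}_{\mathbb{R}}\mathfrak{p}\longrightarrow\mathcal{H},
\end{equation*}
defined by extending the inclusion $\mathfrak{p}\hookrightarrow\mathcal{H}$ multiplicatively (well-defined precisely because of commutativity), is an isomorphism of Hopf algebras. The ``in particular'' clause is then immediate: any $\mathbb{R}$-basis of $\mathfrak{p}$ furnishes a set of polynomial generators for $\mathcal{H}$.

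The main obstacle is the Cauchy--Schwarz step in the first stage. Tracking the coproduct-of-product expansion across the bialgebra compatibility becomes intricate in higher degrees, and the real-valued structure constants require a genuinely analytic (rather than purely combinatorial) equality argument, in contrast to Zelevinsky's approach over $\mathbb{Z}$ where positivity plus integrality often suffice more directly. Once this step is carried through, the second stage is a straightforward appeal to well-documented Hopf algebra theory.
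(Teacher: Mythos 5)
Your proof is correct, but it takes a genuinely different route from the paper's. The paper argues in two lines: by self-adjointness and positive-definiteness of $(-,-)_{\Omega}$, a positive-degree element $x$ is primitive if and only if $x\perp I^2$ (write $m^*(x)=1\otimes x+x\otimes1+\Delta_+(x)$ with $\Delta_+(x)\in I\otimes I$ and pair against $I\otimes I$), whence $I=I^2\oplus\mathfrak{p}$ for $I=\bigoplus_{n>0}\mathcal{H}_n$; Zelevinsky's Theorem A1.1, a Leray-type theorem valid over $\mathbb{Q}$-algebras, then gives $\mathcal{H}\cong\mathrm{Sym}_{\mathbb{R}}\mathfrak{p}$ directly, with commutativity and cocommutativity falling out as corollaries rather than being proved first. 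You instead establish bicommutativity by hand and then invoke Cartier--Milnor--Moore; both stages of your argument are sound. In fact your ``main obstacle'' goes through less painfully than you fear: in the expansion $(\omega_1\omega_2,\omega_2\omega_1)=\sum(\omega_1,\omega_2^{(1)}\omega_1^{(1)})(\omega_2,\omega_2^{(2)}\omega_1^{(2)})$ obtained from (S) and the bialgebra axiom, grading-orthogonality of $(-,-)_{\Omega}$ kills every term except those with $\deg\omega_2^{(1)}+\deg\omega_1^{(1)}=\deg\omega_1$ and $\deg\omega_2^{(2)}+\deg\omega_1^{(2)}=\deg\omega_2$, so every product occurring inside an inner product has total degree $\deg\omega_1$ or $\deg\omega_2$, each strictly smaller than $\deg\omega_1+\deg\omega_2$ once both factors have positive degree (the degree-zero case being trivial by connectedness); the inductive hypothesis then identifies this sum with both $\|\omega_1\omega_2\|^2$ and $\|\omega_2\omega_1\|^2$, and positive-definiteness gives $\|\omega_1\omega_2-\omega_2\omega_1\|^2=0$. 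Two small remarks: neither your argument nor the paper's uses the positivity axiom (P) here --- your observation that $a_{\omega},b_{\omega}\geq0$ plays no role in the sum-of-squares identity, which needs only (S) and positive-definiteness --- and your route buys a self-contained elementary proof of bicommutativity at the cost of redoing by hand what the single identity $I=I^2\oplus\mathfrak{p}$ plus the cited structure theorem delivers for free.
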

\begin{proof}
Let $I:=\bigoplus_{n\in\mathbb{Z}_{>0}}\mathcal{H}_n$. Then by \cite[Theorem~A1.1]{Zelevinsky_1981_bk} it suffices to show that $I=I^2\oplus \mathfrak{p}$, which follows from the same argument of \cite[Proposition~3.1.2]{Grinberg_Reiner_Notes_Hopf} (see also \cite[Theorem~3.1.7]{Grinberg_Reiner_Notes_Hopf}).
\end{proof}

Consider the pair $(R(q),\Psi(q))$ where 
$$R(q)=\bigoplus_{n\in\mathbb{Z}_{\geq0}}\mathbb{Z}\left[\mathrm{Irr}(\mathrm{GL}_n(\mathbb{F}_q))\right]$$ 
and $\Psi(q)$ consists of the irreducible characters; this is a PSH algebra (over $\mathbb{Z}$) studied in \cite{Zelevinsky_1981_bk}. Then the scalar extension $(R(q)_{\mathbb{R}},\Psi(q))$ is a real PSH algebra. This process suggests the following concept.

\begin{defi}\label{defi: descending}
Let $(\mathcal{H},\Omega)$ be a real PSH algebra. We call it descending, if there is a PSH algebra $(A,\Psi)$ over $\mathbb{Z}$ such that there is an isomorphism of connected graded Hopf algebras
$$\iota\colon A_{\mathbb{R}}\longrightarrow \mathcal{H}$$
with $\iota(\Psi)=\Omega$. Otherwise, $(\mathcal{H},\Omega)$ is called \emph{non-descending}.
\end{defi}

Clearly, this concept of (non-)descending would be interesting only if there exists non-descending real PSH algebras. In the below, for each $q$, we modify $C(q)$ to get one.

\vspace{2mm} Recall that $C(q)$ is a free $\mathbb{C}$-module with the particular basis $\overline{\Omega}(q)$ consisting of the irreducible invariant characters. By requiring homogeneous elements of different degrees to be orthogonal, the hermitian forms on the components $C_n(q)$ combine to a natural $\mathbb{C}$-valued bilinear form $(-,-)$ on $C(q)$, for which $\overline{\Omega}(q)$ is orthogonal (see \cite[Section~1.2]{Zelevinsky_1981_bk}). Normalising the elements in $\overline{\Omega}(q)$ we get an orthonormal homogeneous basis ${\Omega}(q)$ (note that this is done by multiplying the basis elements by  \emph{the quadratic roots} of some positive rational numbers). So we can consider the real inner-product space 
$$\mathfrak{C}(q):=\mathbb{R}[{\Omega}(q)],$$
graded by degrees of the elements in $\Omega(q)$. To continue, we need:

\begin{lemm}\label{lemm:Q-combination}
Let $L$ be a block-diagonal Levi subgroup of a standard parabolic subgroup of $G_n=\mathrm{GL}_n$, and let $\mathcal{L}$ be its Lie algebra. Then both $R^{\mathcal{G}_n}_{\mathcal{L}}$ and ${^*R^{\mathcal{G}_n}_{\mathcal{L}}}$ take invariant characters to $\mathbb{Q}_{\geq0}$-linear combinations of invariant characters. 
\end{lemm}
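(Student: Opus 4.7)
The plan is to split the proof into two parts: first, handle ${}^*R^{\mathcal{G}_n}_{\mathcal{L}}$ by a direct computation on the additive characters; then, deduce the statement for $R^{\mathcal{G}_n}_{\mathcal{L}}$ via Frobenius reciprocity (the adjunction recalled in Proposition~\ref{prop: Mackey} and already used in Proposition~\ref{prop: C(q) is Hopf}). The starting observation is that, because the irreducible invariant characters of $\mathcal{G}_n^F$ are by definition the complex additive characters invariant under $G_n^F$, any irreducible such $\chi$ can be written uniquely as $\chi=\sum_{\psi\in\mathcal{O}}\psi$ for a single $G_n^F$-orbit $\mathcal{O}$ of $1$-dimensional additive characters of $\mathcal{G}_n^F$.

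For ${}^*R^{\mathcal{G}_n}_{\mathcal{L}}$, I would fix such a $\chi$, substitute the decomposition $\chi=\sum_{\psi\in\mathcal{O}}\psi$ into the defining formula
$$({^*R}^{\mathcal{G}_n}_{\mathcal{L}}(\chi))(x)=\frac{1}{|U_P^F|}\sum_{y\in\mathcal{U}_P^F}\chi(x+y),$$
and exploit the additivity of each $\psi$ to separate variables. The factor $\sum_{y\in\mathcal{U}_P^F}\psi(y)$ then equals $|U_P^F|$ or $0$ according to whether $\psi|_{\mathcal{U}_P^F}\equiv1$, so ${^*R}^{\mathcal{G}_n}_{\mathcal{L}}(\chi)=\sum_{\psi\in\mathcal{O}_0}\psi|_{\mathcal{L}^F}$, where $\mathcal{O}_0:=\{\psi\in\mathcal{O}:\psi|_{\mathcal{U}_P^F}\equiv1\}$. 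Because $L$ normalises $U_P$, the subset $\mathcal{O}_0$ is $L^F$-stable; grouping the restrictions $\psi|_{\mathcal{L}^F}$ into their $L^F$-orbits (with integer multiplicities coming from possible coincidences under restriction) exhibits ${^*R}^{\mathcal{G}_n}_{\mathcal{L}}(\chi)$ as a $\mathbb{Z}_{\geq0}$-combination of irreducible invariant characters of $\mathcal{L}^F$, which is stronger than needed.

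For $R^{\mathcal{G}_n}_{\mathcal{L}}$, I would take $\chi$ an irreducible invariant character of $\mathcal{L}^F$ and apply the adjunction
$$(R^{\mathcal{G}_n}_{\mathcal{L}}(\chi),\psi)_{\mathcal{G}_n^F}=(\chi,{^*R}^{\mathcal{G}_n}_{\mathcal{L}}(\psi))_{\mathcal{L}^F}$$
to each irreducible invariant character $\psi$ of $\mathcal{G}_n^F$. By the previous step, ${^*R}^{\mathcal{G}_n}_{\mathcal{L}}(\psi)$ is a $\mathbb{Z}_{\geq0}$-combination of irreducible invariant characters of $\mathcal{L}^F$; since these characters are orthogonal with squared norms of the form $|\mathcal{O}|\cdot|\mathcal{L}^F|/|L^F|\in\mathbb{Q}_{>0}$, the right-hand side lies in $\mathbb{Q}_{\geq0}$. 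Dividing by $(\psi,\psi)_{\mathcal{G}_n^F}\in\mathbb{Q}_{>0}$ gives the expansion coefficient of $\psi$ in $R^{\mathcal{G}_n}_{\mathcal{L}}(\chi)$, which therefore lies in $\mathbb{Q}_{\geq0}$.

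I do not anticipate a serious obstacle. The only delicate bookkeeping concerns the ${}^*R$ step: different elements of $\mathcal{O}_0$ may restrict to coinciding additive characters of $\mathcal{L}^F$, but such coincidences only increase positive integer multiplicities, so $L^F$-invariance of the resulting multi-set guarantees that it decomposes as a $\mathbb{Z}_{\geq0}$-combination of full $L^F$-orbit sums, i.e.\ of irreducible invariant characters. After that, everything is a formal consequence of the positivity of the standard inner products.
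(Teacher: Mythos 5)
Your proof is correct, but it runs in the opposite direction from the paper's and uses a different mechanism. The paper also reduces to one of the two functors by adjunction, but it chooses to treat $R^{\mathcal{G}_n}_{\mathcal{L}}$ directly: it identifies $\mathcal{G}_n^F$ with the kernel of $\mathrm{GL}_n(\mathbb{F}_q[\epsilon])\to G_n^F$ ($\epsilon^2=0$), so that $\mathrm{GL}_n(\mathbb{F}_q[\epsilon])\cong G_n^F\ltimes\mathcal{G}_n^F$, and then expresses $R^{\mathcal{G}_n}_{\mathcal{L}}(f)$ as an explicit positive rational multiple of $\mathrm{Res}\circ\mathrm{Ind}$ of a genuine character of a finite group; positivity then comes from the positivity of ordinary character induction. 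You instead treat ${}^*R^{\mathcal{G}_n}_{\mathcal{L}}$ directly, using the decomposition of an irreducible invariant character as a $G_n^F$-orbit sum of linear additive characters and the orthogonality relation $\frac{1}{|U_P^F|}\sum_{y\in\mathcal{U}_P^F}\psi(y)\in\{0,1\}$, and then transfer to $R^{\mathcal{G}_n}_{\mathcal{L}}$ by adjunction and the rationality of the squared norms $|\mathcal{O}|\cdot|\mathcal{L}^F|/|L^F|$. Your bookkeeping on the restriction step is right: $\mathcal{O}_0$ is $L^F$-stable because $L$ normalises $U_P$ and preserves $\mathcal{L}$, and an $L^F$-invariant multiset of linear characters with $\mathbb{Z}_{\geq0}$ multiplicities is a $\mathbb{Z}_{\geq0}$-combination of orbit sums. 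Your route is more elementary (it never leaves the additive group $\mathcal{G}_n^F$) and yields the slightly stronger conclusion that ${}^*R^{\mathcal{G}_n}_{\mathcal{L}}$ has $\mathbb{Z}_{\geq0}$ coefficients, which the paper's normalising factor $\frac{1}{|P^F|}\cdot\frac{|\mathcal{P}^F|}{|\mathcal{G}_n^F|}$ does not give; the paper's route, on the other hand, connects the statement to genuine representations of $\mathrm{GL}_n(\mathbb{F}_q[\epsilon])$, which fits the broader theme of the author's work on groups over local rings.
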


(See also the much stronger  conjecture in \cite[Conjecture~3.1]{Let09CharRedLieAlg}.)

\begin{proof}
By adjunction we only need to handle the case of $R^{\mathcal{G}_n}_{\mathcal{L}}$. 

\vspace{2mm} Let $\mathbb{F}_q[\epsilon]$ be the ring of finite dual numbers,  where $\epsilon^2=0$. Then, as a group, $\mathcal{G}_n^F$ is isomorphic to the kernel of the natural projection
$$\mathrm{GL}_n(\mathbb{F}_q[\epsilon])\longrightarrow G_n^F,$$ 
which produces a semi-direct product decomposition of subgroups
$$\mathrm{GL}_n(\mathbb{F}_q[\epsilon])\cong G_n^F\ltimes \mathcal{G}_n^F.$$
Moreover, the conjugation action of $\mathrm{GL}_n(\mathbb{F}_q[\epsilon])$ on $\mathcal{G}^F$ factors through the adjoint action of $G_n^F$ on $\mathcal{G}_n^F$. This allows one to use the usual character induction/restriction of finite groups to describe the Harish-Chandra induction $R^{\mathcal{G}_n}_{\mathcal{L}}$.

\vspace{2mm} Suppose that $P\subseteq G_n$ is a standard parabolic subgroup with Levi subgroup $L$, and write $\mathcal{P}$ for the Lie algebra of $P$. Given an invariant character $f$ of $\mathcal{L}^F$, let $\widetilde{f}$ denotes the pull-back of $f$ along the projection $\mathcal{P}^F\rightarrow \mathcal{L}^F$. Then by directly computing the formula of induced characters we see that
\begin{equation*}
R^{\mathcal{G}_n}_{\mathcal{L}}(f)
=\frac{1}{|P^F|}\cdot\frac{|\mathcal{P}^F|}{|\mathcal{G}_n^F|}\cdot
\mathrm{Res}_{\mathcal{G}_n^F}^{G_n(\mathbb{F}_q[\epsilon])}
\mathrm{Ind}_{\mathcal{P}^F}^{G_n(\mathbb{F}_q[\epsilon])}
\widetilde{f},
\end{equation*}
where  $\mathrm{Ind}$ and $\mathrm{Res}$ are the usual character induction/restriction of finite groups. 

\vspace{2mm} In particular, we see that $R^{\mathcal{G}_n}_{\mathcal{L}}(f)$ is a $\mathbb{Q}_{\geq0}$-combination of  $G_n(\mathbb{F}_q[\epsilon])$-characters restricting to the subgroup $\mathcal{G}_n^F$; however, any character of $G_n(\mathbb{F}_q[\epsilon])$ restricts to an invariant character of  $\mathcal{G}_n^F$ by construction, so the assertion holds.
\end{proof}

\begin{thm}\label{thm: a real PSH alg}
The pair $(\mathfrak{C}(q),\Omega(q))$ is a non-descending real $PSH$ algebra.
\end{thm}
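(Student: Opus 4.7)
The plan is to verify axioms (P) and (S) via Lemma~\ref{lemm:Q-combination} and the existing structure of $C(q)$, and then to rule out descendability by exhibiting one explicit structure coefficient in the orthonormal basis $\Omega(q)$ that must be irrational; the last step is where the real work lies.

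For closure and positivity, take $\omega = \chi/\sqrt{(\chi,\chi)}$ and $\omega' = \chi'/\sqrt{(\chi',\chi')}$ in $\Omega(q)$; then $m(\omega\otimes\omega') = R^{n}_{k,l}(\chi\boxtimes\chi')/\sqrt{(\chi,\chi)(\chi',\chi')}$, and Lemma~\ref{lemm:Q-combination} writes the numerator as a $\mathbb{Q}_{\geq 0}$-combination $\sum_{\chi''\in\overline{\Omega}(q)}a_{\chi''}\chi'' = \sum a_{\chi''}\sqrt{(\chi'',\chi'')}\,\omega''$, giving an $\mathbb{R}_{\geq 0}$-combination of $\Omega(q)$; closure under $m^*$ follows identically from the ${^*R}$-case of the same lemma, while $e,e^*$ are trivially compatible. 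This already yields (P). Axiom (S) is inherited from $C(q)$: the hermitian form is $\mathbb{R}$-valued and positive-definite on $\mathfrak{C}(q)$ with $\Omega(q)$ orthonormal by construction, and the adjunction of \cite[Proposition~2.3.4]{Let2005paper} between $R^{n}_{k,l}$ and ${^*R^{n}_{k,l}}$ passes directly to $m,m^*$; the pair $e,e^*$ is trivially adjoint.

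For non-descendability I would compute a single structure constant explicitly. Let $\chi_0 \equiv 1$ be the trivial additive character of $\mathcal{G}_1^F=\mathbb{F}_q$, so that $\omega_0 = \sqrt{(q-1)/q}\,\chi_0 \in \Omega_1(q)$, and let $\chi_N(X):=\sum_{A\in O_N}\psi_0(\mathrm{tr}(AX))$ for a fixed nontrivial additive character $\psi_0$ of $\mathbb{F}_q$, with $O_N\subset\mathfrak{gl}_2(\mathbb{F}_q)$ the regular nilpotent orbit; a direct orbit count gives $|O_N|=q^2-1$, $(\chi_N,\chi_N)=q^3/(q-1)$ and $(\chi_0,\chi_0)=q/(q-1)$. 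From the definition of ${^*R^{2}_{1,1}}$ the inner $y$-sum, by orthogonality of $\psi_0$, forces $A_{21}=0$, and the only $A\in O_N$ with $A_{21}=0$ are the $q-1$ strictly upper-triangular matrices, all with $A_{11}=A_{22}=0$; hence ${^*R^{2}_{1,1}}(\chi_N)=(q-1)(\chi_0\boxtimes\chi_0)$. Adjunction then gives $(R^{2}_{1,1}(\chi_0\boxtimes\chi_0),\chi_N) = q^2/(q-1)$, so $\chi_N$ occurs in $R^{2}_{1,1}(\chi_0\boxtimes\chi_0)$ with multiplicity $1/q$; assembling, the coefficient of $\omega_N := \chi_N/\sqrt{(\chi_N,\chi_N)}$ in $\omega_0^2 = \frac{q-1}{q}R^{2}_{1,1}(\chi_0\boxtimes\chi_0)$ equals
\[
\frac{q-1}{q^2}\cdot\sqrt{\frac{q^3}{q-1}} \;=\; \sqrt{\frac{q-1}{q}}.
\]
Since $\gcd(q,q-1)=1$, this radical is rational only if $q$ and $q-1$ are both squares, which via $a^2-b^2=1$ forces $(a,b)=(1,0)$, impossible for any prime power $q\geq 2$. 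Hence the coefficient is irrational, which cannot occur in a $\mathbb{Z}$-form with basis $\Omega(q)$, so $(\mathfrak{C}(q),\Omega(q))$ is non-descending. The main obstacle is precisely this last step: locating a single product whose $\Omega(q)$-expansion is forced to be irrational uniformly in $q$, and the key simplification is the identity ${^*R^{2}_{1,1}}(\chi_N)=(q-1)(\chi_0\boxtimes\chi_0)$ combined with adjunction, which sidesteps any direct computation of $R^{2}_{1,1}(\chi_0\boxtimes\chi_0)$.
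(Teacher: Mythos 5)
Your proposal is correct and follows the paper's strategy: axioms (P) and (S) are verified exactly as in the paper via Lemma~\ref{lemm:Q-combination} and the adjunction of Harish-Chandra induction/restriction, and non-descendability is established by exhibiting a single irrational structure constant with respect to the orthonormal basis $\Omega(q)$. The only difference is the witness: the paper pairs $m(\omega_0\otimes\omega_0)$ against the normalised trivial character of $\mathcal{G}_2^F$ and obtains $\sqrt{(q+1)/q}$, whereas you pair it against the normalised regular-nilpotent orbit character $\omega_N$ and obtain $\sqrt{(q-1)/q}$; your identity ${^*R}^{2}_{1,1}(\chi_N)=(q-1)(\chi_0\boxtimes\chi_0)$, the orbit count $|O_N|=q^2-1$, the norms, and the final coefficient all check out.
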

\begin{proof}
By Lemma~\ref{lemm:Q-combination}, the operations $m, m^*, e, e^*$ naturally restrict to $\mathfrak{C}(q)$, so the exactly same argument of Proposition~\ref{prop: C(q) is Hopf} implies that $\mathfrak{C}(q)$ is a connected graded commutative and co-commutative Hopf algebra over $\mathbb{R}$. Now the positivity follows from again  Lemma~\ref{lemm:Q-combination}, and the self-adjointness follows from the adjointness between the Harish-Chandra induction/restriction (and that $(-,-)_{\Omega(q)}=(-,-)$ on $\mathfrak{C}(q)$, regarded as a subset of $C(q)$). So  all the conditions in Definition~\ref{def: real PSH} are fulfilled.

\vspace{2mm} For the non-descending property, it suffices to find $f,g\in\Omega(q)$ with $m(f\otimes g)\notin\mathbb{Z}[\Omega(q)]$. Since $\Omega(q)$ is orthonormal, it suffices to find $f,g,h\in\Omega(q)$ with $(m(f\otimes g),h)_{\Omega(q)}\notin\mathbb{Z}$. Consider the trivial  character $1_{\mathcal{G}_1^F}$ of $\mathcal{G}_1^F$; note that $(1_{\mathcal{G}_1^F},1_{\mathcal{G}_1^F})=\frac{q}{q-1}$, so 
$$\sqrt{\frac{q-1}{q}}\cdot1_{\mathcal{G}_1^F}\in \Omega(q).$$ 
Similarly,
$$\sqrt{\frac{q(q-1)^2(q+1)}{q^4}}\cdot1_{\mathcal{G}_2^F}\in \Omega(q).$$
Take $f=g=\sqrt{\frac{q-1}{q}}\cdot1_{\mathcal{G}_1^F}$ and $h=\sqrt{\frac{q(q-1)^2(q+1)}{q^4}}\cdot1_{\mathcal{G}_2^F}$. Then (see the notation convention in Section~\ref{sec: a complex Hopf alg})
\begin{equation*}\label{key}
	\begin{split}
		(m(f\otimes g),h)
		&=\frac{q-1}{q}\cdot { \sqrt{\frac{q(q-1)^2(q+1)}{q^4}} } \cdot  \left(  R_{1,1}^2(1_{\mathcal{L}_{1,1}^F}) ,  1_{\mathcal{G}_2^F}    \right)\\
		&=\left(\frac{q-1}{q}\right)^2\cdot { \sqrt{\frac{q+1}{q}} }
		\cdot  \left(  1_{\mathcal{L}_{1,1}^F} , {^*R}_{1,1}^2 (1_{\mathcal{G}_2^F})    \right)_{\mathcal{L}_{1,1}^F}\\
		&=\frac{1}{q^2}\cdot { \sqrt{\frac{q+1}{q}} }
		\cdot\sum_{x\in\mathcal{L}_{1,1}^F}   {^*R}_{1,1}^2 (1_{\mathcal{G}_2^F}) (x)\\
		&=\frac{1}{q^3}\cdot { \sqrt{\frac{q+1}{q}} }
		\cdot \sum_{z\in\mathcal{B}_{2}^F} 1_{\mathcal{G}_2^F}(z),
	\end{split}
\end{equation*}
where the last equality follows from the definition of Harish-Chandra restriction, in which $\mathcal{B}_2$ means the set of upper triangular matrices in $\mathcal{G}_2$. So
\begin{equation*}
	(m(f\otimes g),h)_{\Omega(q)}=(m(f\otimes g),h)
	=\sqrt{\frac{q+1}{q}}
	\notin\mathbb{Q},
\end{equation*}
as desired.
\end{proof}

Note that the above argument shows that $(\mathfrak{C}(q),\Omega(q))$ cannot even  be defined over $\mathbb{Q}$. Our second non-descending real PSH algebra will be constructed, by modifying $(\mathfrak{C}(q),\Omega(q))$, in the end of the next section.

\section{Duality operation and pre-cuspidal function}\label{sec:duality}

The notion of duality operation was first considered by Lusztig, Curtis \cite{Curtis_duality_1980_JA}, Alvis \cite{Alvis_BAMS_1979_duality}, and Kawanaka \cite{Kawanaka_Fourier_LieAlg_FiniteField_1982}, and plays an important role in the representation theory of finite groups of Lie type; later, this notion was also defined for $p$-adic groups by Aubert\cite{Aubert_1996_TAMS_Dualite}. In this section we consider the Lie algebra version of duality operation, for which several basic properties were established in \cite{Kawanaka_Fourier_LieAlg_FiniteField_1982} and \cite{Lehrer_SpInvFunc_1996}. In this section we focus on the case of $\mathfrak{gl}_n(\mathbb{F}_q)$.

\begin{defi}
The duality operation $\mathcal{D}_n$ is a linear endomorphism on the space $C_n(q)=C(\mathcal{G}_n^F)$ defined by
$$f\longmapsto \mathcal{D}_n(f):=\sum_{P} (-1)^{r(P)} R_{\mathcal{L}_P}^{\mathcal{G}_n} \circ {^* R}_{\mathcal{L}_P}^{\mathcal{G}_n}(f),$$
where the sum runs over the $F$-stable parabolic subgroups $P\subseteq G_n$ containing a fixed (arbitrary) $F$-stable Borel subgroup,  $\mathcal{L}_P$ denotes the Lie algebra of an $F$-stable Levi subgroup of $P$, and $r(-)$ means the semisimple $\mathbb{F}_q$-rank of an algebraic group. (We make the convention that $\mathcal{D}_0=\mathrm{id}$ is the identity map on $C_0(q)=\mathbb{C}$.) By basic properties of algebraic groups, $\mathcal{D}_n$ is actually independent of the choices of Borel subgroups and Levi subgroups (see e.g.\ \cite[7.2.2]{DM_book_2nd_edition}).
\end{defi}

To continue, we shall make some preparations on Hopf algebras.

\vspace{2mm} Recall that, the module $\mathrm{End}(A)$ of linear endomorphisms of a bi-algebra $A$ (over any commutative unitary ring) admits a convolution product structure, making $\mathrm{End}(A)$ an associative algebra. Then, by definition, an antipode of $A$  is a linear endomorphism that is also a two-sided inverse (with respect to the convolution product) to the identity map $\mathrm{id}_A$. It is known that every connected graded bi-algebra admits a unique antipode. We refer to \cite{Grinberg_Reiner_Notes_Hopf} for more details.

\begin{lemm}\label{lemm: use primitive element value to determine antipode}
Let $\mathcal{H}$ be a co-commutative connected graded Hopf algebra over a field $k$ of characteristic zero. Then its antipode $S$ is uniquely characterised as an  anti-endomorphism of $k$-algebra (preserving the identity element) taking $x$ to $-x$ for every primitive element $x\in \mathcal{H}$.
\end{lemm}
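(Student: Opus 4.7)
The plan is to split the assertion into two halves: verify that the antipode $S$ of $\mathcal{H}$ already satisfies the three listed properties, and then show that these properties pin it down uniquely.

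For the existence half, the identity $S(1)=1$ and the anti-algebra property $S(ab)=S(b)S(a)$ hold for the antipode of \emph{any} Hopf algebra, so I would simply quote them from \cite{Sweedler1969hopf} or \cite{Grinberg_Reiner_Notes_Hopf}. To check $S(x)=-x$ on a primitive $x\in\mathcal{H}$, plug $\Delta(x)=1\otimes x+x\otimes 1$ into the defining antipode equation $m\circ(S\otimes\mathrm{id})\circ\Delta=\eta\circ\epsilon$; this gives $S(x)+x=\epsilon(x)\cdot 1=0$, the right-hand side vanishing because a primitive element of a connected graded Hopf algebra sits in strictly positive degree, where $\epsilon$ vanishes.

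For uniqueness I would invoke the Cartier--Milnor--Moore theorem: since $\mathcal{H}$ is connected graded cocommutative over a field of characteristic zero, it is isomorphic as a Hopf algebra to the universal enveloping algebra $U(\mathfrak{p})$ of its Lie algebra of primitives. In particular $\mathcal{H}$ is generated as a $k$-algebra by $\mathfrak{p}$. Any $k$-algebra anti-endomorphism that fixes $1$ is determined by its values on an algebra generating set, so two such maps which both send $x$ to $-x$ on $\mathfrak{p}$ must coincide on every monomial in elements of $\mathfrak{p}$, and hence on all of $\mathcal{H}$.

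The one non-formal ingredient is the Cartier--Milnor--Moore theorem, which is where the main work is hidden; both of its hypotheses are used essentially, since without cocommutativity, or in positive characteristic (where divided-power Hopf algebras appear), the primitive elements can fail to generate the whole algebra and the uniqueness argument breaks down.
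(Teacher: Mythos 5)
Your proposal is correct and follows essentially the same route as the paper: both arguments rest on the fact that a cocommutative connected graded Hopf algebra over a field of characteristic zero is generated as an algebra by its primitives (the paper cites this generation statement directly from \cite{Grinberg_Reiner_Notes_Hopf}, while you deduce it from the full Cartier--Milnor--Moore theorem), combined with the antipode being an algebra anti-endomorphism satisfying $S(x)=-x$ on primitives. Your explicit verification of $S(x)=-x$ from the antipode axiom is a welcome expansion of what the paper dismisses as ``well-known''.
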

\begin{proof}
It is known that a co-commutative connected graded Hopf algebra over a field of characteristic zero is generated by its subspace of primitive elements, that is,
$$\mathcal{H}=k+\mathfrak{p}+\mathfrak{p}^2+\cdots,$$
where $\mathfrak{p}$ denotes the space of primitive elements of $\mathcal{H}$ (see \cite[1.5.14(d)]{Grinberg_Reiner_Notes_Hopf}). So, since for every Hopf algebra the antipode is an anti-endomorphism of algebra (see e.g.\ \cite[1.4.10]{Grinberg_Reiner_Notes_Hopf}), the values of $S$ on primitive elements would determine $S$; however, it is well-known that $S(x)=-x$ for every primitive $x$, so the assertion holds.
\end{proof}

The argument of Lemma~\ref{lemm: use primitive element value to determine antipode} also reveals a property of  pre-cuspidal invariant  functions, a notion introduced in \cite[Subsection~2.2]{Kawanaka_Fourier_LieAlg_FiniteField_1982}:

\begin{defi}\label{defi:pre-cuspidal}
Call $f\in C(\mathcal{G}^F)$ pre-cuspidal, if ${^*R}^{\mathcal{G}}_{\mathcal{L}}(f)=0$ for every $\mathcal{L}$ the Lie algebra of an $F$-stable Levi subgroup of an $F$-stable proper parabolic subgroup of $G$. 
\end{defi}

In particular, if $G$ admits no $F$-stable proper parabolic subgroup, like $G=G_1$, then every element in $C(\mathcal{G}^F)$ is pre-cuspidal.

\begin{thm}\label{thm:pre-cuspidal}
Every complex invariant function on $\mathfrak{gl}_n(\mathbb{F}_q)$ is a $\mathbb{C}$-linear combination of (possibly trivial) Harish-Chandra inductions of pre-cuspidal functions.
\end{thm}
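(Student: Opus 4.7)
The plan is to reformulate the statement in Hopf-algebraic language and then apply a standard structure result for co-commutative connected graded Hopf algebras over a characteristic zero field. The key observation is that Kawanaka's pre-cuspidal functions on $\mathcal{G}_n^F$ are exactly the primitive elements of degree $n$ in the Hopf algebra $C(q)$ constructed in Proposition~\ref{prop: C(q) is Hopf}.

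First I would verify this identification. By the definition of $m^*$ and the fact that ${^*R}^{n}_{0,n}(f) = 1\otimes f$ and ${^*R}^{n}_{n,0}(f) = f\otimes 1$, an element $f\in C_n(q)$ with $n\geq 1$ is primitive if and only if ${^*R}^{n}_{k,l}(f) = 0$ for every composition $k+l=n$ with $k,l\geq 1$. Every proper $F$-stable Levi of $G_n=\mathrm{GL}_n$ has the form $L_{n_1,\ldots,n_r}$ with $r\geq 2$, and by the transitivity of Harish-Chandra restriction (\cite[Proposition~2.3.6]{Let2005paper}) one has
\begin{equation*}
{^*R}^{\mathcal{G}_n}_{\mathcal{L}_{n_1,\ldots,n_r}}
= {^*R}^{\mathcal{L}_{n_1,\,n-n_1}}_{\mathcal{L}_{n_1,\ldots,n_r}} \circ {^*R}^{n}_{n_1,\,n-n_1}.
\end{equation*}
Hence vanishing on all length-two compositions forces vanishing on every proper Levi, so primitive $\Leftrightarrow$ pre-cuspidal.

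Next I would invoke the structure theorem for co-commutative connected graded Hopf algebras over a field of characteristic zero, which was already used in the proof of Lemma~\ref{lemm: use primitive element value to determine antipode} (\cite[1.5.14(d)]{Grinberg_Reiner_Notes_Hopf}): denoting by $\mathfrak{p}\subseteq C(q)$ the subspace of primitive elements,
\begin{equation*}
C(q) = \mathbb{C} + \mathfrak{p} + \mathfrak{p}^2 + \mathfrak{p}^3 + \cdots,
\end{equation*}
where products are taken with respect to $m$. Unfolding the definition, an iterated product $m(x_1\otimes\cdots\otimes x_r)$ with $x_i$ primitive of degree $n_i$ is exactly $R^{n}_{n_1,\ldots,n_r}(x_1\boxtimes\cdots\boxtimes x_r)$, the Harish-Chandra induction from the Levi $\mathcal{L}_{n_1,\ldots,n_r}$ of the external tensor product; that external tensor product is itself pre-cuspidal on $\mathcal{L}_{n_1,\ldots,n_r}^F$ because ${^*R}$ on a product of Levis factors componentwise. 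The degree-$n$ component of the expansion therefore expresses an arbitrary $f\in C_n(q)$ as a $\mathbb{C}$-linear combination of Harish-Chandra inductions of pre-cuspidal functions, with the ``$\mathbb{C}$'' summand (when $\mathfrak{p}^0 = \mathbb{C}$ is hit only in degree $0$) and the ``$\mathfrak{p}$'' summand accounting for the ``possibly trivial'' induction case mentioned in the statement.

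No serious obstacle is expected: the only thing to keep track of is the compatibility between the two-block compositions directly visible to the coproduct and the arbitrary multi-block Levis appearing in Definition~\ref{defi:pre-cuspidal}, which is cleanly handled by transitivity of ${^*R}$. In this sense, the theorem is essentially a clean corollary of Proposition~\ref{prop: C(q) is Hopf} once the pre-cuspidal/primitive dictionary is set up.
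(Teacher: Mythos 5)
Your proposal is correct and follows essentially the same route as the paper: identify pre-cuspidal functions with the homogeneous primitive elements of $C(q)$, then invoke the generation of a co-commutative connected graded Hopf algebra in characteristic zero by its primitives ($C(q)=\mathbb{C}+\mathfrak{p}+\mathfrak{p}^2+\cdots$) together with the gradedness of $\mathfrak{p}$. You simply spell out two points the paper leaves implicit, namely the transitivity argument reducing vanishing on all proper Levis to vanishing on two-block Levis, and the unfolding of iterated products as Harish-Chandra inductions.
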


This theorem can be viewed as an analogue to the result that every irreducible character of $\mathrm{GL}_n(\mathbb{F}_q)$ is a uniform function; see \cite[Theorem~3.2]{Lusztig_Srinivasan_char_finite_unitary_gp_1977} (see also the discussions around \cite[Theorem~11.7.3]{DM_book_2nd_edition} and \cite[Corollary~2.4.19]{Geck_Malle_2020book}).

\begin{proof}
As in the proof of Lemma~\ref{lemm: use primitive element value to determine antipode}, we have $C(q)=\mathbb{C}+\mathfrak{p}+\mathfrak{p}^2+\cdots$, where $\mathfrak{p}$ denotes the space of primitive elements in $C(q)$. Meanwhile, by definition, the pre-cuspidal invariant functions of $\mathfrak{gl}_n(\mathbb{F}_q)$ (for all $n$) are precisely the homogeneous primitive elements in the Hopf algebra $C(q)$. Since $\mathfrak{p}$ is graded (in other words,  $\mathfrak{p}=\bigoplus_n{\mathfrak{p}\cap C_{n}(q)}$; see e.g.\ \cite[1.3.19]{Grinberg_Reiner_Notes_Hopf}), the assertion follows.
\end{proof}

\begin{prop}\label{prop:dual functor as antipode}
Let $S$ be the antipode of the connected graded Hopf algebra $C(q)$. Then 
$$S=\bigoplus_{n\in\mathbb{Z}_{\geq0}}(-1)^n\cdot \mathcal{D}_n$$ 
as graded linear endomorphisms of $C(q)$. Equivalently, the duality operation $\mathcal{D}_n$ can be defined as $(-1)^n\cdot S|_{C_n(q)}$.
\end{prop}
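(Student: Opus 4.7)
The plan is to derive and apply the explicit Takeuchi-type formula for the antipode of a connected graded bialgebra (available by Proposition~\ref{prop: C(q) is Hopf}), and match it term by term with the definition of $\mathcal{D}_n$. Setting $\pi := \mathrm{id}_{C(q)} - e \circ e^*$ (the projector onto strictly positive degree) and using $\mathrm{id} = e\, e^* + \pi$ inside the convolution algebra, one solves the antipode equation $S \star \mathrm{id} = e\, e^*$ iteratively to obtain
$$S = \sum_{k \geq 0} (-1)^k \pi^{\star k},$$
a sum that terminates on each $C_n(q)$ at $k = n$, since $\pi$ has image in strictly positive degree and each convolution factor consumes at least one unit of grading.

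Next I would unwind $\pi^{\star k}$ using the definitions of $m$ and $m^*$ from Section~\ref{sec: a complex Hopf alg}. By the explicit description of iterated multiplication and co-multiplication, for $n \geq 1$ one has
$$\pi^{\star k}\big|_{C_n(q)} = \sum_{(n_1,\ldots,n_k) \vDash n} R^n_{n_1,\ldots,n_k} \circ {^*R}^n_{n_1,\ldots,n_k},$$
where the sum runs over compositions of $n$ of length $k$ (the condition $n_i \geq 1$ comes precisely from the projector $\pi$ killing degree-zero components), and $R^n_{n_1,\ldots,n_k}$, ${^*R}^n_{n_1,\ldots,n_k}$ denote iterated Harish-Chandra induction/restriction to the Lie algebra of the block-diagonal Levi $L_{n_1,\ldots,n_k}$. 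Hence
$$S|_{C_n(q)} = \sum_{k=1}^n (-1)^k \sum_{(n_1,\ldots,n_k) \vDash n} R^n_{n_1,\ldots,n_k} \circ {^*R}^n_{n_1,\ldots,n_k}.$$

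Finally I would unwind $\mathcal{D}_n$ in parallel: fixing the upper-triangular Borel, the $F$-stable parabolics of $G_n = \mathrm{GL}_n$ containing it are in bijection with compositions $(n_1,\ldots,n_k) \vDash n$ through their block-diagonal Levis, and the semisimple $\mathbb{F}_q$-rank of $L_{n_1,\ldots,n_k}$ equals $\sum(n_i - 1) = n - k$. Therefore
$$\mathcal{D}_n = \sum_{k=1}^n (-1)^{n-k} \sum_{(n_1,\ldots,n_k) \vDash n} R^n_{n_1,\ldots,n_k} \circ {^*R}^n_{n_1,\ldots,n_k},$$
and since $(-1)^n (-1)^{n-k} = (-1)^k$, multiplying by $(-1)^n$ recovers $S|_{C_n(q)}$; the $n=0$ case reduces to the identity on $\mathbb{C}$ on both sides by convention. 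The main obstacle is the bookkeeping in the iteration step and the sign matching; everything else is purely formal. A shorter alternative would invoke Lemma~\ref{lemm: use primitive element value to determine antipode}: on a primitive (equivalently, by Theorem~\ref{thm:pre-cuspidal}, pre-cuspidal) $x \in C_n(q)$ only $P = G_n$ contributes to $\mathcal{D}_n(x)$, yielding $(-1)^n \mathcal{D}_n(x) = (-1)^n (-1)^{n-1} x = -x$, which matches the unique antipode; however, that route additionally requires proving that $\bigoplus_n (-1)^n \mathcal{D}_n$ is a $\mathbb{C}$-algebra morphism, i.e., the compatibility $\mathcal{D}_{k+l} \circ R^{k+l}_{k,l} = R^{k+l}_{k,l} \circ (\mathcal{D}_k \otimes \mathcal{D}_l)$, which is non-trivial to establish directly, so the Takeuchi comparison appears to be the cleaner plan.
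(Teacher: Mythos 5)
Your argument is correct, and it takes a genuinely different route from the paper. The paper proves the identity by invoking Lemma~\ref{lemm: use primitive element value to determine antipode}: it checks that $S':=\bigoplus_n(-1)^n\mathcal{D}_n$ is an algebra endomorphism --- citing Lehrer's result that the duality operation commutes with Harish-Chandra induction --- and that $S'(x)=-x$ for primitive $x$, the latter being exactly your closing observation that only $P=G_n$ contributes on a pre-cuspidal function since all proper restrictions vanish. So the ``shorter alternative'' you sketch at the end \emph{is} the paper's proof, and the commutation you flag as non-trivial is simply imported from \cite[3.15]{Lehrer_SpInvFunc_1996} rather than proved. Your primary route via the Takeuchi expansion $S=\sum_k(-1)^k\pi^{\star k}$ is sound: the identification of $\pi^{\star k}|_{C_n(q)}$ with the sum over length-$k$ compositions of $n$ of $R^n_{n_1,\ldots,n_k}\circ{}^*R^n_{n_1,\ldots,n_k}$ follows from the (co)associativity/transitivity already established in Proposition~\ref{prop: C(q) is Hopf}, the bijection between standard parabolics of $\mathrm{GL}_n$ and compositions is standard, and the semisimple $\mathbb{F}_q$-rank count $r(L_{n_1,\ldots,n_k})=n-k$ makes the signs match ($(-1)^n(-1)^{n-k}=(-1)^k$), consistently with the paper's own computation $r(G_n)=n-1$. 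What your approach buys is self-containedness --- no appeal to Lehrer's commutation theorem, and as a by-product you get an explicit closed formula for $S|_{C_n(q)}$ --- at the cost of the combinatorial bookkeeping; the paper's approach is shorter but rests on the external commutation input and on $C(q)$ being generated by primitives.
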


Our proof in the below can be adapted to cover the  group version  of the same assertion in \cite[Remark~11.12]{Zelevinsky_1981_bk}, for  $R(q)=\bigoplus_{n}\mathbb{Z}\left[\mathrm{Irr}(\mathrm{GL}_n(\mathbb{F}_q))\right]$, by applying the trick of base change to proceed the argument in $R(q)_{\mathbb{Q}}$. (Note that one cannot directly work inside $R(q)$, as $R(q)$ is not generated by its submodule of primitive elements.)

\begin{proof}
Let $S':=\bigoplus_{n}(-1)^n\cdot \mathcal{D}_n$. Since $C(q)$ is commutative, by the linearity of $S'$ and Lemma~\ref{lemm: use primitive element value to determine antipode} it suffices to show that
\begin{equation}\label{temp1:alg}
S'(m(a\otimes b))=m(S'(a)\otimes S'(b))
\end{equation}
for all $a\in C_{n_1}(q)$ and $b\in C_{n_2}(q)$, and that
\begin{equation}\label{temp2:primitive}
S'(x)=-x
\end{equation}
for all primitive $x\in C(q)$. Actually, \eqref{temp1:alg} follows directly from the commutative property between duality operations and Harish-Chandra inductions (see \cite[3.15]{Lehrer_SpInvFunc_1996}), so we only need to prove \eqref{temp2:primitive}.

\vspace{2mm} Let $x=x_1+x_2\cdots$ be the homogeneous decomposition (note that the degree zero component of a primitive element in a graded Hopf algebra is always zero  \cite[1.4.17]{Grinberg_Reiner_Notes_Hopf}). Then we shall show that 
\begin{equation*}
\mathcal{D}_n(x_n)=(-1)^{n-1}x_n
\end{equation*}
for every $n\geq\mathbb{Z}_{>0}$. By definition of $C(q)$, the property $m^*(x)=1\otimes x+x\otimes 1$ is equivalent to that:
$${^*R}^{n_1+n_2}_{n_1,n_2}x_{n_1+n_2}=0$$
for every $n_1,n_2\in\mathbb{Z}_{>0}$. So the transitivity of Harish-Chandra restriction implies that
$$\mathcal{D}_n(x_n)=(-1)^{r(G_n)}{^*R}^{n}_{n}x_n=(-1)^{n-1}x_n.$$
Thus $S'=S$.
\end{proof}

In \cite{Springer_SteinbergFunction_1980}  Springer introduced the notion of Steinberg function and proved that it admits several nice properties. For $\mathfrak{gl}_n(\mathbb{F}_q)$ this function is $\mathrm{St}_n:=\mathcal{D}_n(1_{\mathcal{G}_n^F})\in C_n(q)$.

\begin{coro}
Let $S$ be the antipode of the connected graded Hopf algebra $C(q)$. Then 
$$\mathrm{St}_n=(-1)^n\cdot S(1_{\mathcal{G}_n^F})$$
for every $n$.
\end{coro}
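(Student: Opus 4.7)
The plan is to observe that this corollary is essentially a direct substitution into Proposition~\ref{prop:dual functor as antipode}. Indeed, that proposition identifies the (unique) antipode $S$ of the connected graded Hopf algebra $C(q)$ with the graded sum $\bigoplus_n (-1)^n \mathcal{D}_n$. Since $1_{\mathcal{G}_n^F}$ lies in the homogeneous component $C_n(q)$, the restriction of $S$ to that component is $(-1)^n \mathcal{D}_n$.

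Concretely, I would write: applying $S$ to $1_{\mathcal{G}_n^F}\in C_n(q)$ and invoking Proposition~\ref{prop:dual functor as antipode} yields
\[
S(1_{\mathcal{G}_n^F}) = (-1)^n\, \mathcal{D}_n(1_{\mathcal{G}_n^F}) = (-1)^n\, \mathrm{St}_n,
\]
where the last equality is just Springer's definition of the Steinberg function as recalled immediately before the corollary. Multiplying both sides by $(-1)^n$ (and using $(-1)^{2n}=1$) gives the desired identity $\mathrm{St}_n = (-1)^n\, S(1_{\mathcal{G}_n^F})$.

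There is no real obstacle here: the work has already been done in Proposition~\ref{prop:dual functor as antipode}, and the corollary is only a repackaging of the degree-$n$ piece of that identity evaluated at the distinguished invariant function $1_{\mathcal{G}_n^F}$. The only thing worth emphasising in the writeup is the appeal to the uniqueness of the antipode on a connected graded bi-algebra (already used in Proposition~\ref{prop: C(q) is Hopf}), which guarantees that the $S$ appearing in the statement is indeed the one computed by Proposition~\ref{prop:dual functor as antipode}.
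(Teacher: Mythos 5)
Your proposal is correct and matches the paper's own (one-line) proof: the corollary is just the degree-$n$ component of Proposition~\ref{prop:dual functor as antipode} evaluated at $1_{\mathcal{G}_n^F}$, combined with the definition $\mathrm{St}_n=\mathcal{D}_n(1_{\mathcal{G}_n^F})$ recalled immediately beforehand. Nothing further is needed.
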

\begin{proof}
This follows immediately from Proposition~\ref{prop:dual functor as antipode}.
\end{proof}

Now we give a conceptual characterisation of the duality operations $\mathcal{D}_n$, working simultaneously for all $n$.

\begin{thm-defi}\label{thm-defi:characterisation of duality using pre-cuspidal functions}
There exists a unique collection $\{\mathbb{D}_n\}_{n\in\mathbb{Z}_{\geq0}}$ of linear endomorphisms of the vector spaces $C(\mathcal{G}_n^F)$, one for each $n\in\mathbb{Z}_{\geq0}$, satisfying that
\begin{itemize}
\item[(i)] $\mathbb{D}_0=\mathrm{id}_{\mathbb{C}}$ on $\mathbb{C}$, and $\{\mathbb{D}_n\}_n$ commutes with Harish-Chandra inductions: 
$$\mathbb{D}_{n_1+n_2}(R_{n_1,n_2}^{n_1+n_2}(f_1\boxtimes f_2))=R_{n_1,n_2}^{n_1+n_2}(\mathbb{D}_{n_1}(f_1)\boxtimes \mathbb{D}_{n_2}(f_2))$$
for any positive integers $n_i$ and any $f_i\in C(\mathcal{G}_{n_i}^F)$ ($i=1,2$), where we view $\mathcal{G}_{n_1}\times \mathcal{G}_{n_2}$ as the block-diagonal matrices in $\mathcal{G}_{n_1+n_2}$; 
\item[(ii)] $\mathbb{D}_n(f)=(-1)^{n-1}\cdot f$ for every pre-cuspidal function $f\in C(\mathcal{G}_n^F)$, for all $n\in\mathbb{Z}_{>0}$.
\end{itemize}
\end{thm-defi}
\begin{proof}
Such a collection exists because $\{(-1)^n\cdot S|_{C_n(q)}\}_n$ is such a collection, where $S$ is the antipode of $C(q)$. 

\vspace{2mm} For the uniqueness, note that (i) implies that $S':=\bigoplus_n(-1)^n\mathbb{D}_n$ is an algebra endomorphism of the commutative algebra $C(q)$. Meanwhile, (i), (ii), and Theorem~\ref{thm:pre-cuspidal} imply that $S'$ takes every primitive element $x$ to $-x$. So Lemma~\ref{lemm: use primitive element value to determine antipode} implies that $S'$ is exactly the antipode $S$, which illustrates the uniqueness of the collection. 
\end{proof}

\begin{prop}\label{prop:coincidence of the two def of duality}
Let $\{ \mathbb{D}_n \}_n$ be the collection defined in Theorem--Definition~\ref{thm-defi:characterisation of duality using pre-cuspidal functions}. Then
$$\mathbb{D}_n=\mathcal{D}_n$$
for every $n\in\mathbb{Z}_{\geq0}$.
\end{prop}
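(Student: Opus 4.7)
The plan is to identify both $\mathbb{D}_n$ and $\mathcal{D}_n$ with the same linear endomorphism of $C_n(q)$, namely $(-1)^n \cdot S|_{C_n(q)}$, where $S$ is the antipode of the Hopf algebra $C(q)$; once this double identification is in place, the equality $\mathbb{D}_n = \mathcal{D}_n$ follows immediately.

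First, I would invoke Proposition~\ref{prop:dual functor as antipode}, which gives $S|_{C_n(q)} = (-1)^n \mathcal{D}_n$, or equivalently $\mathcal{D}_n = (-1)^n \cdot S|_{C_n(q)}$ for every $n \in \mathbb{Z}_{\geq 0}$. So one half of the identification is already at hand.

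For the other half, I would verify that the collection $T_n := (-1)^n \cdot S|_{C_n(q)}$ satisfies conditions (i) and (ii) of Theorem--Definition~\ref{thm-defi:characterisation of duality using pre-cuspidal functions}. Since $C(q)$ is commutative (Proposition~\ref{prop: C(q) is Hopf}), the antipode $S$ is an honest algebra endomorphism, hence $S(m(a \otimes b)) = m(S(a) \otimes S(b))$; combined with the multiplicativity $(-1)^{n_1 + n_2} = (-1)^{n_1}(-1)^{n_2}$ of signs on homogeneous pieces, this yields the compatibility with Harish-Chandra induction required by (i), while $T_0(1) = 1$ handles the case $n = 0$. For (ii), a pre-cuspidal function $f \in C_n(q)$ is precisely a homogeneous primitive element of $C(q)$ (as noted in the proof of Theorem~\ref{thm:pre-cuspidal}), so $S(f) = -f$ and therefore $T_n(f) = (-1)^n \cdot (-f) = (-1)^{n-1} f$.

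The uniqueness clause of Theorem--Definition~\ref{thm-defi:characterisation of duality using pre-cuspidal functions} then forces $\mathbb{D}_n = T_n$, and combining with the first step gives $\mathbb{D}_n = \mathcal{D}_n$. I do not anticipate a genuine obstacle here: every required ingredient, including the uniqueness of $\{\mathbb{D}_n\}_n$ and the antipode-as-duality identification, is already established; the only residual work is the two one-line verifications above, both of which are direct consequences of standard Hopf-algebra facts about antipodes on commutative Hopf algebras and on primitive elements.
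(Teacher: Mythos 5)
Your proposal is correct and follows essentially the same route as the paper: both arguments reduce the claim to the identification of $\bigoplus_n(-1)^n\mathbb{D}_n$ with the antipode $S$ via the uniqueness clause of Theorem--Definition~\ref{thm-defi:characterisation of duality using pre-cuspidal functions}, and then conclude by Proposition~\ref{prop:dual functor as antipode}. The only difference is cosmetic: you explicitly re-verify that $(-1)^n\cdot S|_{C_n(q)}$ satisfies (i) and (ii), which the paper already records in the existence part of the Theorem--Definition.
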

\begin{proof}
From the argument of Theorem--Definition~\ref{thm-defi:characterisation of duality using pre-cuspidal functions} we see that $\bigoplus_n(-1)^n\mathbb{D}_n$ is the antipode of $C(q)$. So the assertion follows from Proposition~\ref{prop:dual functor as antipode}.
\end{proof}

We shall emphasise that, even though the characterisation of duality operation given in Theorem--Definition~\ref{thm-defi:characterisation of duality using pre-cuspidal functions} and Proposition~\ref{prop:coincidence of the two def of duality} is proved in terms of Hopf algebra, but its statement is free of terminology in Hopf algebra (and in particular, free of $C(q)$), so we hope to ask:

\begin{quest}\label{quest: general groups/Lie algebras?}
How to adapt the characterisation of duality operations given in Theorem--Definition~\ref{thm-defi:characterisation of duality using pre-cuspidal functions} and Proposition~\ref{prop:coincidence of the two def of duality}, so that it works for general connected reductive groups and their Lie algebras? 
\end{quest}

On the other hand, one has the following slightly stronger ``finite analogue'' of the characterisation.

\begin{prop}\label{prop: finite analogue}
Given any positive integer $m$, there exists a unique finite collection $\{\mathbb{D}_0,\mathbb{D}_1,\cdots,\mathbb{D}_m\}$ of linear endomorphisms of the vector spaces $C(\mathcal{G}_n^F)$, one for each $n\in\{0,1,\cdots,m\}$, satisfying that
\begin{itemize}
\item[(i')] $\mathbb{D}_0=\mathrm{id}_{\mathbb{C}}$ on $\mathbb{C}$, and $\{\mathbb{D}_n\}_n$ commutes with the Harish-Chandra inductions: 
$$\mathbb{D}_{n_1+n_2}(R_{n_1,n_2}^{n_1+n_2}(f_1\boxtimes f_2))=R_{n_1,n_2}^{n_1+n_2}(\mathbb{D}_{n_1}(f_1)\boxtimes \mathbb{D}_{n_2}(f_2))$$
for any positive integers $n_i$ (with $n_1+n_2\leq m$) and any $f_i\in C(\mathcal{G}_{n_i}^F)$ ($i=1,2$), where we view $\mathcal{G}_{n_1}\times \mathcal{G}_{n_2}$ as the block-diagonal matrices of $\mathcal{G}_{n_1+n_2}$; 
\item[(ii')] $\mathbb{D}_n(f)=(-1)^{n-1}\cdot f$ for every pre-cuspidal function $f\in C(\mathcal{G}_n^F)$ and $n\in\{0,1,\cdots,m\}$.
\end{itemize}
Moreover, 
$$\mathbb{D}_n=\mathcal{D}_n$$
for every $n\in\{0,1,\cdots,m\}$.
\end{prop}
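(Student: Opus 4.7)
The plan is to adapt the proof of Theorem--Definition~\ref{thm-defi:characterisation of duality using pre-cuspidal functions} and Proposition~\ref{prop:coincidence of the two def of duality} to a finite truncation. The crucial observation is that for $n \leq m$, any decomposition $n = n_1 + n_2$ automatically satisfies $n_1 + n_2 \leq m$, so (i') applies freely within degrees $\leq m$; likewise, the pre-cuspidal functions in $C(\mathcal{G}_n^F)$ with $n \leq m$ all fall within the scope of (ii'). Hence the same logic should go through without needing the full infinite structure of $C(q)$.

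For existence, I would take $\mathbb{D}_n := \mathcal{D}_n$ for each $n \in \{0, 1, \ldots, m\}$. Property (i') is the well-known commutation of the duality operation with Harish-Chandra induction \cite[3.15]{Lehrer_SpInvFunc_1996}. For (ii'), unfolding the definition on a pre-cuspidal $f \in C(\mathcal{G}_n^F)$ shows that ${^*R}_{\mathcal{L}_P}^{\mathcal{G}_n}(f) = 0$ for every proper $F$-stable parabolic $P$, leaving only the $P = G_n$ term in the defining sum; this yields $\mathcal{D}_n(f) = (-1)^{r(G_n)} f = (-1)^{n-1} f$, using that $G_n = \mathrm{GL}_n$ has semisimple $\mathbb{F}_q$-rank $n - 1$.

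For uniqueness, let $\{\mathbb{D}_n\}_{n=0}^m$ be any collection satisfying (i') and (ii'). By Theorem~\ref{thm:pre-cuspidal}, every $f \in C_n(q)$ with $n \leq m$ is a $\mathbb{C}$-linear combination of Harish-Chandra inductions of the form $R^n_{n_1, \ldots, n_k}(g_1 \boxtimes \cdots \boxtimes g_k)$ with each $g_j$ pre-cuspidal. By the transitivity of Harish-Chandra induction, each such multi-step induction decomposes into iterated two-step inductions whose intermediate degrees are bounded by $n \leq m$, so (i') applies repeatedly and reduces $\mathbb{D}_n$ on each summand to the combined action of $\mathbb{D}_{n_j}$ on the pre-cuspidal pieces $g_j$; (ii') then fixes $\mathbb{D}_{n_j}(g_j) = (-1)^{n_j - 1} g_j$. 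Consequently $\mathbb{D}_n(f)$ is completely determined, and since $\mathcal{D}_n$ also satisfies the same constraints, the two must coincide.

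The main subtlety to handle is that the expression of $f$ as a linear combination of Harish-Chandra inductions of pre-cuspidal functions is not canonical, so a priori the recipe in the uniqueness argument could depend on the chosen decomposition; this potential ambiguity is dispelled by the existence half, since the value computed from any valid decomposition must coincide with $\mathcal{D}_n(f)$. I do not expect a genuine obstacle here --- this is essentially a truncated replay of Theorem--Definition~\ref{thm-defi:characterisation of duality using pre-cuspidal functions} and Proposition~\ref{prop:coincidence of the two def of duality}, with the role of the primitives of the Hopf algebra $C(q)$ played by pre-cuspidal functions of degree at most $m$.
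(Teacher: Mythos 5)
Your proposal is correct and follows essentially the same route as the paper: existence comes from the already-constructed operations $\mathcal{D}_n$ (your direct verification of (ii') from the definition is the same computation used in the proof of Proposition~\ref{prop:dual functor as antipode}), and uniqueness follows from Theorem~\ref{thm:pre-cuspidal} together with transitivity of Harish-Chandra induction, noting that all intermediate degrees stay $\leq m$. Your remark that the non-canonicity of the decomposition is harmless for uniqueness (and is resolved by existence) is exactly the right way to close the argument.
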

\begin{proof}
It suffices to prove the uniqueness, for which we can modify the argument of Theorem--Definition~\ref{thm-defi:characterisation of duality using pre-cuspidal functions}. Let $\mathfrak{p}$ be the subspace of primitive elements in $C(q)$, and consider the subspace $\bigoplus_{n\in\{0,1,\cdots,m\}} C_n(q)$ of $C(q)$. Since $\mathfrak{p}$ is graded (see e.g.\ \cite[1.3.19]{Grinberg_Reiner_Notes_Hopf}), we have 
\begin{equation*}
\mathfrak{p}\cap \bigoplus_{n\in\{0,1,\cdots,m\}} C_n(q)=\bigoplus_{n\in\{0,1,\cdots,m\}} \mathfrak{p}_n
\end{equation*}
for $\mathfrak{p}_n:=\mathfrak{p}\cap C_n(q)$. So, by (i') and the Hopf algebra fact that $C(q)$ is generated by $\mathfrak{p}$ (\cite[1.5.14(d)]{Grinberg_Reiner_Notes_Hopf}), we see that $\mathbb{D}_n$ is determined once its values at the Harish-Chandra inductions of $\mathfrak{p}_{\lambda_1}\times\cdots\times\mathfrak{p}_{\lambda_k}$ (for all partitions $\lambda=(\lambda_1,\cdots,\lambda_k)\vdash n$) are determined. So by Theorem~\ref{thm:pre-cuspidal} (recall that elements in $\mathfrak{p}_n$ are precisely the pre-cuspidal functions in $C_n(q)$) we see that, within the conditions (i') and (ii'), the set $\{\mathbb{D}_0,\cdots\mathbb{D}_1,\cdots,\mathbb{D}_{n-1}\}$ determines $\mathbb{D}_n$. Thus the assertion follows inductively.
\end{proof}

The above inductive argument reflects the fact that the unique antipode in any connected graded Hopf algebra (over any commutative unitary ring) is determined inductively; see the argument of \cite[1.4.16]{Grinberg_Reiner_Notes_Hopf}.

\begin{expl}[The Steinberg function on $\mathfrak{gl}_2(\mathbb{F}_q)$]
It is not difficult to see that  the Steinberg function $\mathrm{St}_2$ is a non-induced neither pre-cuspidal element in $C_2(q)$.  In this example we illustrate explicitly how $\{\mathbb{D}_0,\mathbb{D}_1\}$, together with the conditions in Proposition~\ref{prop: finite analogue}, determine the value of $\mathbb{D}_2$ at  $\mathrm{St}_2$, without a priori knowing that $\mathbb{D}_2=\mathcal{D}_2$. First, by definition we have $R_{1,1}^2(1_{\mathcal{L}_{1,1}^F})=\mathrm{St}_2+1_{\mathcal{G}_2^F}$. Second, by \cite[(6)]{Springer_SteinbergFunction_1980} the difference $\mathrm{St}_2-1_{\mathcal{G}_2^F}$ is pre-cuspidal. Thus (note that every element in $C_1(q)$ is pre-cuspidal)
\begin{equation*}
\begin{split}
\mathbb{D}_2(\mathrm{St}_2)
&=\frac{1}{2}\cdot \mathbb{D}_2( R_{1,1}^2(1_{\mathcal{L}_{1,1}^F}))+\frac{1}{2}\cdot \mathbb{D}_2(\mathrm{St}_2-1_{\mathcal{G}_2^F})\\
&=\frac{1}{2}\cdot ( R_{1,1}^2(\mathbb{D}_1(1_{\mathcal{G}_1^F})\boxtimes \mathbb{D}_1(1_{\mathcal{G}_1^F}))-\frac{1}{2}\cdot (\mathrm{St}_2-1_{\mathcal{G}_2^F})\\
&=\frac{1}{2}\cdot R_{1,1}^2(1_{\mathcal{G}_2^F})-\frac{1}{2}\cdot (\mathrm{St}_2-1_{\mathcal{G}_2^F})=1_{\mathcal{G}_2^F},
\end{split}
\end{equation*}
as shall be expected.
\end{expl}

In \cite{Kawanaka_Fourier_LieAlg_FiniteField_1982} Kawanaka proved that $\mathcal{D}_n$ is involutive and isometric. Actually, the corresponding group version of this property was conjectured by Lusztig, and was settled by Curtis \cite[1.7]{Curtis_duality_1980_JA}, Alvis \cite[4.2]{Alvis_BAMS_1979_duality}, and Kawanaka \cite[2.1.2]{Kawanaka_Fourier_LieAlg_FiniteField_1982}; nowadays this property becomes a standard fact in representation theory of finite groups of Lie type. (For more details, see \cite[Page~24]{Lusztig_2021May_comments}.) Here we deduce this property via the characterisation given in Theorem--Definition~\ref{thm-defi:characterisation of duality using pre-cuspidal functions} (and Proposition~\ref{prop:coincidence of the two def of duality}), by using basic properties of commutative Hopf algebras.

\begin{coro}[\cite{Kawanaka_Fourier_LieAlg_FiniteField_1982}]\label{coro:involutivity of D_n}
Let $\{\mathbb{D}_n\}_n$ be the collection of operations defined in Theorem--Definition~\ref{thm-defi:characterisation of duality using pre-cuspidal functions}. Then $\mathbb{D}_n$ is involutive and isometric for every $n\in\mathbb{Z}_{\geq0}$. Equivalently, $\mathcal{D}_n$ is involutive and isometric for every $n\in\mathbb{Z}_{\geq0}$.
\end{coro}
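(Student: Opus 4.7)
The plan is to reduce both involutivity and isometry to general properties of the antipode $S$ of the connected graded Hopf algebra $C(q)$, via the identification $\mathcal{D}_n=(-1)^n\cdot S|_{C_n(q)}$ established in Proposition~\ref{prop:dual functor as antipode} (together with $\mathbb{D}_n=\mathcal{D}_n$ from Proposition~\ref{prop:coincidence of the two def of duality}). Because $((-1)^n)^2=1$, the involutivity of $\mathcal{D}_n$ reduces to $S^2|_{C_n(q)}=\mathrm{id}$, and since $(-1)^n$ is a real scalar, the isometry of $\mathcal{D}_n$ with respect to $(-,-)$ reduces to the isometry of $S|_{C_n(q)}$.

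For the first half, I would invoke the classical fact that in any commutative (equivalently, co-commutative) Hopf algebra the antipode squares to the identity. Since $C(q)$ enjoys both properties by Proposition~\ref{prop: C(q) is Hopf}, we obtain $S^2=\mathrm{id}_{C(q)}$, and hence $\mathcal{D}_n^2=\mathrm{id}$ on every homogeneous component.

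For the second half, the idea is to show that $S$ is self-adjoint with respect to $(-,-)$, so that
\[
(S(f),S(g))=(f,S^{\star}S(g))=(f,S^{2}(g))=(f,g),
\]
where $T^{\star}$ denotes the hermitian adjoint of a linear endomorphism $T$. To prove $S^{\star}=S$, I would exploit that the Harish-Chandra adjunction makes $m$ and $m^{*}$ a hermitian adjoint pair on $C(q)$, and that the unit $e$ and co-unit $e^{*}$ are likewise adjoint (the latter because degree-$0$ is orthogonal to positive-degree components). From these one checks that taking the hermitian adjoint commutes with the convolution product on $\mathrm{End}(C(q))$; applying $(-)^{\star}$ to the defining identity $S*\mathrm{id}_{C(q)}=e\circ e^{*}$ of the antipode then yields $S^{\star}*\mathrm{id}_{C(q)}=e\circ e^{*}$, so by the uniqueness of the antipode in a connected graded bi-algebra (\cite[Page~238]{Sweedler1969hopf}) we conclude $S^{\star}=S$.

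The main technical point is verifying that hermitian adjunction commutes with convolution, which is a short bookkeeping step using $m^{\star}=m^{*}$, $(m^{*})^{\star}=m$, $(T_{1}\otimes T_{2})^{\star}=T_{1}^{\star}\otimes T_{2}^{\star}$, and $e^{\star}=e^{*}$; no substantive obstacle is expected beyond this. The conceptual payoff is that both involutivity and isometry — properties previously established by direct case analysis in \cite{Kawanaka_Fourier_LieAlg_FiniteField_1982} — here arise transparently from the bi-commutativity of $C(q)$ and the adjointness built into its Hopf structure.
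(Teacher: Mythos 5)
Your proposal is correct, and the involutivity half coincides with the paper's proof (antipode of a commutative Hopf algebra is involutive). The isometry half, however, takes a genuinely different route. The paper argues via primitives: $S$ acts as $-\mathrm{id}$ on the space $\mathfrak{p}$ of primitive elements, hence is isometric there, and then extends this to all of $C(q)$ using Theorem~\ref{thm:pre-cuspidal} and the compatibility of $S$ with Harish-Chandra induction; carried out in full, that route requires knowing that the subspaces $\mathfrak{p}^{j}$ and $\mathfrak{p}^{k}$ are orthogonal for $j\not\equiv k\pmod 2$ (which follows from self-adjointness of $m$ and $m^{*}$ but is not spelled out). You instead prove that $S$ is hermitian self-adjoint by applying $(-)^{\star}$ to the antipode identity $S*\mathrm{id}=e\circ e^{*}$: since $(f*g)^{\star}=f^{\star}*g^{\star}$ (the order reversal from composition is undone by the swap $m\leftrightarrow m^{*}$) and $(e\circ e^{*})^{\star}=e\circ e^{*}$, the map $S^{\star}$ is again a convolution inverse of $\mathrm{id}$, whence $S^{\star}=S$ and $(S f,S g)=(f,S^{2}g)=(f,g)$. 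This bypasses any appeal to primitives or to Theorem~\ref{thm:pre-cuspidal} for the isometry statement, relying only on the adjunctions already used to establish the PSH structure in Theorem~\ref{thm: a real PSH alg}; it is self-contained and if anything more airtight than the paper's sketch. Two cosmetic remarks: uniqueness of the antipode (equivalently, of two-sided convolution inverses) holds in any bialgebra and needs no connectedness; and since the form is sesquilinear, the map $T\mapsto T^{\star}$ is conjugate-linear, but this is harmless here because the identities being dualised involve no scalars.
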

\begin{proof}
As we have already seen, $\bigoplus_n(-1)^n\mathbb{D}_n$ is the antipode $S$ of $C(q)$, so it suffices to show that $S$ is involutive and isometric. Indeed, it is a basic fact of Hopf algebra that the antipode of any commutative (or co-commutative) Hopf algebra is involutive (see e.g.\ \cite[1.4.12]{Grinberg_Reiner_Notes_Hopf}). On the other hand, since $S$ is isometric when restricting to the subspace of primitive elements (Lemma~\ref{lemm: use primitive element value to determine antipode}), it is isometric by Theorem~\ref{thm:pre-cuspidal} and Theorem--Definition~\ref{thm-defi:characterisation of duality using pre-cuspidal functions}(i).
\end{proof}

\begin{remark}\label{remark:transition to groups}
After suitable modifications, all the above results (and their proofs) in this section, so far, work equally well when $C(q)$ is replaced by the scalar extension $R(q)_{\mathbb{Q}}=\bigoplus_{n}\mathbb{Q}\left[\mathrm{Irr}(\mathrm{GL}_n(\mathbb{F}_q))\right]$ of $R(q)$, as been noticed in the discussion below Proposition~\ref{prop:dual functor as antipode}. This is because all we have used are just basic properties of commutative and co-commutative connected graded Hopf algebra over a field of characteristic zero. In particular, the characterisation of duality operation (Theorem--Definition~\ref{thm-defi:characterisation of duality using pre-cuspidal functions} and Proposition~\ref{prop:coincidence of the two def of duality}), and its finite analogue (Proposition~\ref{prop: finite analogue}), adapt easily to the characters of the groups $\mathrm{GL}_n(\mathbb{F}_q)$.
\end{remark}

\begin{coro}\label{coro:another real PSH}
Let $S=\bigoplus_{n\in\mathbb{Z}_{\geq0}}(-1)^n\cdot \mathcal{D}_n$ be the antipode of $C(q)$. Then $(\mathfrak{C}(q),S(\Omega(q)))$ is a non-descending real PSH algebra, and  is different from $(\mathfrak{C}(q),\Omega(q))$ (i.e.\ $S(\Omega(q))\neq \Omega(q)$).
\end{coro}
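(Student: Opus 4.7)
My plan is to transport the PSH structure through $S$. The key observation is that, since $C(q)$ is both commutative and co-commutative, the antipode $S$ is a Hopf algebra automorphism (not merely an anti-automorphism), and by Corollary~\ref{coro:involutivity of D_n} it is an isometric involution with respect to $(-,-)$. I also need that $S$ preserves the real lattice $\mathfrak{C}(q)$, which holds because each $\mathcal{D}_n$ is an integer combination of compositions of Harish-Chandra induction and restriction, and those preserve $\mathbb{R}$-combinations of $\Omega(q)$ by Lemma~\ref{lemm:Q-combination}.

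To verify the real PSH axioms for $(\mathfrak{C}(q),S(\Omega(q)))$, I would use three consequences of the previous paragraph: $S(\Omega(q))$ is an orthonormal basis of $\mathfrak{C}(q)$; $S$ intertwines $m$ with itself (by commutativity) and $m^{*}$ with itself (by cocommutativity); and $S(1)=1$. Positivity for $S(\Omega(q))$ then reduces to positivity for $\Omega(q)$ by applying $S$ to the known $\mathbb{R}_{\geq 0}$-expansions of $m(f_1\otimes f_2)$ and $m^{*}(f)$, and self-adjointness transfers because the inner product making $S(\Omega(q))$ orthonormal is again $(-,-)$, for which the adjunction $(m,m^{*})$ and $(e,e^{*})$ is an intrinsic property already established for $(\mathfrak{C}(q),\Omega(q))$.

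For the non-descending property, I would transport the irrational witness from Theorem~\ref{thm: a real PSH alg}. Letting $a,b,c\in\Omega(q)$ be the triple used there, the isometry and algebra-homomorphism properties of $S$ give
\[
(m(S(a)\otimes S(b)),S(c)) \;=\; (S(m(a\otimes b)),S(c)) \;=\; (m(a\otimes b),c) \;=\; \sqrt{\tfrac{q+1}{q}}\;\notin\;\mathbb{Z},
\]
which certifies that some structure coefficient in the basis $S(\Omega(q))$ is not integral, so the pair cannot arise by base change from $\mathbb{Z}$.

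For the distinctness $S(\Omega(q))\neq\Omega(q)$, I would invoke the known failure of the Lie algebra duality to preserve irreducibility \cite[(5.3)]{Lehrer_SpInvFunc_1996}: there exists $n$ and an irreducible invariant character $\chi$ such that $\mathcal{D}_n(\chi)$ is not of the form $\pm\chi'$ for any irreducible invariant character $\chi'$. Let $\chi_0\in\Omega(q)$ be its normalisation; then $S(\chi_0)=(-1)^n\mathcal{D}_n(\chi_0)$ has an expansion in the orthonormal basis $\Omega(q)$ with at least two non-zero components, and since every element of $\Omega(q)$ is a positive scalar multiple of a single irreducible invariant character, $S(\chi_0)\notin\Omega(q)$. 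The main obstacle I anticipate is precisely this last step, where the qualitative ``failure of irreducibility preservation'' for $\mathcal{D}_n$ on $\overline{\Omega}(q)$ must be carefully translated, after renormalisation, into the statement that $S(\chi_0)$ genuinely leaves $\Omega(q)$; all the other verifications are formal consequences of $S$ being an isometric Hopf algebra automorphism of $\mathfrak{C}(q)$.
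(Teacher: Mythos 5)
Your proposal is correct and follows essentially the same route as the paper: transport the PSH structure (and the irrational structure constant witnessing non-descent) through the isometric involution $S$, which commutes with Harish-Chandra induction, and then derive $S(\Omega(q))\neq\Omega(q)$ from the failure of $\mathcal{D}_n$ to preserve irreducibility. The only cosmetic difference is that the paper makes the last step concrete by taking $\chi=1_{\mathcal{G}_n^F}$, so that $\mathcal{D}_n(\chi)=\mathrm{St}_n$ is, up to scalar, a sum of $N_n\geq 2$ irreducible invariant characters for $n\geq 2$ by Lehrer's result, whereas you invoke the abstract existence of such a $\chi$; your renormalisation argument for why this forces $S(\chi_0)\notin\Omega(q)$ is sound.
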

\begin{proof}
By Corollary~\ref{coro:involutivity of D_n}, $S$ takes the homogeneous orthonormal basis $\Omega(q)\subseteq \mathfrak{C}(q)$ to another homogeneous orthonormal basis $S(\Omega(q))\subseteq \mathfrak{C}(q)$, so the commutativity between duality operations and Harish-Chandra inductions implies the first assertion. 

\vspace{2mm} To see $S(\Omega(q))\neq \Omega(q)$, since $\mathcal{D}_n(1_{\mathcal{G}_n^F})=\mathrm{St}_n$, it suffices to show that, for some large enough $n$, any non-zero $\mathbb{R}$-multiple of $\mathrm{St}_n$ is not an irreducible invariant character. Indeed, by \cite[Corollary~5.3]{Lehrer_SpInvFunc_1996}, up to a non-zero scalar $\mathrm{St}_n$ is the sum of $N_n$ irreducible invariant characters, where $N_n$ denotes the number of nilpotent  $G_n^F$-orbits in $\mathcal{G}_n^F$.  So, one can just take any $n\geq2$.
\end{proof}

\bibliographystyle{alpha}
\bibliography{zchenrefs}

\end{document}